\def\thm@space@setup{%
  \thm@preskip=\parskip \thm@postskip=0pt
}
\DeclareMathOperator{\fwc}{fwc}
\DeclareMathOperator{\inte}{int}
\newtheorem{teo}{Theorem}[section] 
\newtheorem{cor}[teo]{Corolary} 
\newtheorem{defi}[teo]{Definition} 
\newtheorem{exem}[teo]{Exemple} 
\newtheorem{lema}[teo]{Lemma} 
\newtheorem{prop}[teo]{Proposition} 
\theoremstyle{remark} 
\title{A metric for the set of all additive basis}
\author{Luan Alberto Ferreira} 
\date{\today} 
\newcommand{\N}{\mathbb{N}}
\newcommand{\Z}{\mathbb{Z}}
\newcommand{\A}{\mathbb{A}}
\newcommand{\PP}{\mathbb{P}}
\newcommand{\B}{\mathcal{B}}
\newcommand{\pseudo}{\stackrel{p}{\rightarrow}}
\newcommand{\dibigcup}{\displaystyle \bigcup}
\newcommand{\dibigcap}{\displaystyle \bigcap}
\newcommand{\disum}{\displaystyle \sum}
\newcommand{\difrac}{\displaystyle \frac}
\newcommand{\dilim}{\displaystyle \lim}
\newcommand{\fantasma}[3]{{#1}_{{#2}}^{\phantom{{#2}}{#3}}}
\newcommand{\soma}[2]{\disum_{\alpha \in {#1} \triangle {#2}} \difrac{1}{\alpha^2}}
\newcommand{\dd}[2]{|o({#1}) - o({#2})| + \soma{#1}{#2}}
\renewenvironment{proof}{\noindent \textit{Proof:}}{\qed}
\begin{document}

\maketitle

\abstract{The aim of this article is to present a topological tool for the study of additive basis in additive number theory. It will be proposal a metric for the set of all additive basis, in which it will be possible to study properties of some additive bases studying basis near the chosen basis. This metric allow, for example, detect some additive basis in which we can add some integers in it without change its order.}

\tableofcontents

\section{Introduction}

An important problem in additive number theory is to decide if a subset $A \subseteq \N_0$ is an additive basis and, if so, determine its order. For example, we know that $\PP = \{0, 1, 2, 3, 5, 7, 11, 13, 17, 19, 23, \ldots\}$ is an additive basis, but we don't know exactly its order.

Another problem yet in this area is the following: suppose its given an additive basis $A$. Is it true that $o(A) = o(A \cup \Gamma), \ \forall \ \Gamma \subseteq \N$ finite?

This article will present a topological tool for the study of this kind of problems, in terms of a metric for the set of all additive basis.

For the basic results in additive number theory, we recommend Nathanson. For the basic results in metric spaces, see Engelking. We will use this results freely, without any further comment.

\section{Notation}

\begin{itemize}
  \item $\N_0 = \{z \in \Z; \ z \geq 0\}$
  \item $\N = \{z \in \Z; \ z \geq 1\}$
  \item If $\alpha \in \N$, denote $\overleftarrow{n} = \{\beta \in \N_0; \ 0 \leq \beta \leq \alpha\}$.
\end{itemize}

\section{Metric in $\A$}

Denote by $\A$ the set of all additive basis.

\begin{teo}

The function $$\begin{array}[t]{cccc} d: & \A \times \A & \rightarrow & [0, \infty) \\ & (A,B) & \mapsto & |o(A) - o(B)| + \soma{A}{B} \end{array}$$ is a metric in $\A$.

\end{teo}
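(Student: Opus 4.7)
The plan is to verify that $d$ is well-defined into $[0,\infty)$ and then check the three remaining metric axioms. For well-definedness, since $A, B \in \A$ are additive bases, the orders $o(A), o(B)$ are finite positive integers, so $|o(A) - o(B)| < \infty$; the second summand is bounded above by the convergent series $\sum_{n \geq 1} 1/n^2 = \pi^2/6$, hence also finite. Both terms are manifestly non-negative. Symmetry is immediate from $|o(A) - o(B)| = |o(B) - o(A)|$ and $A \triangle B = B \triangle A$. For the identity of indiscernibles, if $A = B$ then $d(A, B) = 0$ trivially; conversely $d(A,B) = 0$ forces $o(A) = o(B)$ and $\soma{A}{B} = 0$, and (under the standard convention that $0$ lies in every additive basis, so $0 \notin A \triangle B$) every term of this latter sum is strictly positive, so it vanishes only when $A \triangle B = \emptyset$, i.e., $A = B$.

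The main content is the triangle inequality $d(A, C) \leq d(A, B) + d(B, C)$, which I would establish summand by summand. For the first summand, the usual triangle inequality for absolute value applied to the real numbers $o(A), o(B), o(C)$ gives
\[
|o(A) - o(C)| \;\leq\; |o(A) - o(B)| + |o(B) - o(C)|.
\]
For the second, the crucial ingredient is the classical set-theoretic inclusion
\[
A \triangle C \;\subseteq\; (A \triangle B) \cup (B \triangle C),
\]
which I would verify by a short case analysis on $\alpha \in A \triangle C$ according to whether $\alpha$ belongs to $B$: e.g., if $\alpha \in A \setminus C$, then $\alpha \in A \triangle B$ when $\alpha \notin B$ and $\alpha \in B \triangle C$ when $\alpha \in B$, and symmetrically for $\alpha \in C \setminus A$. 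Since each $1/\alpha^2$ is non-negative, this inclusion yields
\[
\soma{A}{C} \;\leq\; \soma{A}{B} + \soma{B}{C},
\]
and adding the two coordinatewise estimates produces the desired inequality.

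The only step that requires any genuine thought is the symmetric-difference inclusion driving the triangle inequality for the series term, and even this is a routine and standard identity. Every other verification is purely mechanical, and the whole argument is essentially a packaging of two familiar facts: the real-line triangle inequality applied to $o$, and the symmetric-difference inclusion applied termwise to $1/\alpha^2$.
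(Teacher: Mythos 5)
Your proof is correct and takes essentially the same route as the paper: the triangle inequality is obtained from the inclusion $A \triangle C \subseteq (A \triangle B) \cup (B \triangle C)$ together with the ordinary triangle inequality for $|o(A)-o(C)|$. The only difference is that you also spell out well-definedness, symmetry, and the identity of indiscernibles, which the paper dismisses as routine.
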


\begin{proof} We only need to prove in triangular inequality. For this, let $A, B, C \in \A$. As $$A \triangle C \subseteq (A \triangle B) \cup (B \triangle C),$$ then \begin{eqnarray*}d(A, C) & = & \dd{A}{C} \leq \\ & \leq & \dd{A}{B} + \dd{B}{C} = \\ & = & d(A, B) + d(B, C).\end{eqnarray*} \end{proof}

Note that the function $\begin{array}[t]{cccc} \widehat{d}: & \A \times \A & \rightarrow & [0, \infty) \\ & (A,B) & \mapsto & |o(A) - o(B)| \end{array}$ is a pseudometric.

\begin{exem}

Let's calculate the distance between two additive basis very known. By Lagrange's theorem, $o(\N^2) = 4$. So: \begin{eqnarray*} d(\N, \N^2) & = & \dd{\N}{\N^2} = \\ & = & |1 - 4| + \disum_{\alpha \in \N - \N^2} \difrac{1}{\alpha^2} = \\ & = & 3 + \disum_{\alpha = 2}^{\infty} \difrac{1}{\alpha^2} - \disum_{\alpha = 2}^{\infty} \difrac{1}{\alpha^4} = \\ & = & 3 + \left(\difrac{\pi^2}{6} - 1\right) - \left(\difrac{\pi^4}{90} - 1\right) = \\ & = & 3 + \difrac{\pi^2}{6} - \difrac{\pi^4}{90}. \end{eqnarray*}

\end{exem}

\begin{lema} If $d(A, B) \in \N$, then $A = B$. Moreover, if $x \in \N^*$ and $d(A, B) < \difrac{1}{x^2}$, then $A \cap \overleftarrow{x} = B \cap \overleftarrow{x}$.

\end{lema}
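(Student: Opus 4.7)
My plan is to split the metric as $d(A,B) = n + s(A,B)$, where $n := |o(A)-o(B)| \in \N_0$ and $s(A,B) := \sum_{\alpha \in A \triangle B} 1/\alpha^2$. The crux is the preparatory estimate $s(A,B) < 1$, which follows once I know that $1 \notin A \triangle B$. This in turn holds because every additive basis must contain $1$ (the integer $1$ is not a sum of integers each $\geq 2$, so $1 \in A$ whenever $A \in \A$); combined with the convention that every basis contains $0$ (needed already for $d$ to take finite values, since the sum cannot involve $1/0^2$), I obtain $\{0,1\} \subseteq A \cap B$ and hence $A \triangle B \subseteq \{2,3,4,\ldots\}$, yielding
\[
s(A,B) \;\leq\; \sum_{\alpha=2}^{\infty} \frac{1}{\alpha^2} \;=\; \frac{\pi^2}{6} - 1 \;<\; 1.
\]

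For the first assertion, if $d(A,B) \in \N$, then $d(A,B) = n + s(A,B)$ is a positive integer with $n \in \N_0$ and $s(A,B) \in [0,1)$; the fractional part $s(A,B)$ must therefore vanish, forcing $A \triangle B = \emptyset$ and hence $A = B$. For the second assertion, I argue by contrapositive: if $A \cap \overleftarrow{x} \neq B \cap \overleftarrow{x}$, I pick any $\alpha \in (A \triangle B) \cap \overleftarrow{x}$; since $1 \leq \alpha \leq x$, the contribution of this single term already gives $d(A,B) \geq s(A,B) \geq 1/\alpha^2 \geq 1/x^2$, contradicting $d(A,B) < 1/x^2$.

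The main obstacle here is not computational but conventional: the bound $s(A,B) < 1$, and hence the first assertion, depends on the tacit assumption that every member of $\A$ contains both $0$ and $1$. Without this, taking $A,B \in \A$ with $A \triangle B = \{1\}$ would give $d(A,B) = |o(A)-o(B)| + 1 \in \N$ while $A \neq B$, breaking the lemma; thus the proof is really a verification that this edge case cannot arise. Once the convention is in place, both parts reduce to direct inspection of the two-summand decomposition of $d$.
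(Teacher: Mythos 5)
Your proof is correct and follows essentially the same route as the paper: split $d(A,B)$ into the integer term $|o(A)-o(B)|$ and the sum over $A \triangle B$, bound the latter by $\sum_{\alpha \geq 2} 1/\alpha^2 = \pi^2/6 - 1 < 1$ to force it to vanish when $d(A,B)$ is an integer, and prove the second claim by contrapositive via a single term $1/\alpha^2 \geq 1/x^2$. The only difference is that you make explicit the convention $\{0,1\} \subseteq A$ for every $A \in \A$, which the paper uses tacitly; this is a welcome clarification, not a divergence.
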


\begin{proof} If $d(A, B) \in \N$, then $\soma{A}{B} = 0$, once $|o(A) - o(B)| \in \N$ and $0 \leq \soma{A}{B} \leq \disum_{\alpha = 2}^{\infty} \difrac{1}{\alpha^2} = \difrac{\pi^2}{6} - 1 < 1$. So $A = B$. The other claim is proved by contra-reciprocal: if $A \cap \overleftarrow{x} \neq B \cap \overleftarrow{x}$, then exists $y \in (A \triangle B) \cap \overleftarrow{x} \Rightarrow d(A, B) \geq \difrac{1}{y^2} \geq \difrac{1}{x^2}$. \end{proof}

\begin{prop}

The function $\begin{array}[t]{cccc} o: & \A & \rightarrow & \N^* \\ & A & \mapsto & o(A) \end{array}$ is a weak contraction, but it isn't a contraction.

\end{prop}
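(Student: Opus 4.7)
The plan is to handle the two claims in turn. That $o$ is a \emph{weak contraction} is the trivial direction: unwinding definitions, I must verify $|o(A) - o(B)| \leq d(A, B)$ for all $A, B \in \A$, with $\N^*$ carrying its standard metric inherited from $\R$. This is immediate from $\sum_{\alpha \in A \triangle B} 1/\alpha^2 \geq 0$, which lets me write $|o(A) - o(B)| \leq |o(A) - o(B)| + \sum_{\alpha \in A \triangle B} 1/\alpha^2 = d(A, B)$.

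For the claim that $o$ is not a contraction, I need to rule out the existence of any $k \in [0, 1)$ with $|o(A) - o(B)| \leq k \cdot d(A, B)$ for all $A, B \in \A$. The strategy is to exhibit a sequence of pairs for which the ratio $|o(A) - o(B)|/d(A, B)$ tends to $1$. This means keeping the numerator at its smallest positive value (namely $1$) while driving the symmetric-difference contribution $\sum_{\alpha \in A \triangle B} 1/\alpha^2$ to $0$ by concentrating the discrepancy at a single large integer.

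A clean candidate is $A = \N$ (order $1$) paired with $B_N = \N \setminus \{N\}$ for each integer $N \geq 2$. I need to check that $B_N$ is still an additive basis and compute its order: every positive integer other than $N$ lies in $B_N$ and is thus a sum of one element, while $N = 1 + (N - 1)$ is a sum of two elements of $B_N$ (both summands being in $B_N$ since $N \geq 2$). So $o(B_N) = 2$. With $A \triangle B_N = \{N\}$, the metric evaluates to $d(\N, B_N) = 1 + 1/N^2$, and
$$\frac{|o(\N) - o(B_N)|}{d(\N, B_N)} = \frac{1}{1 + 1/N^2} \longrightarrow 1 \quad (N \to \infty),$$
which rules out every $k < 1$.

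The only point of care is the order calculation for $B_N$: one must verify that excising a single element from $\N$ does not push the order above $2$, which is the content of the decomposition $N = 1 + (N - 1)$. Beyond that, both parts reduce to arithmetic; the weak-contraction inequality is a direct consequence of the non-negativity of the tail sum appearing in $d$.
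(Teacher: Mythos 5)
Your proof is correct and follows essentially the same route as the paper: both establish the weak-contraction inequality directly from the non-negativity of the sum in $d$, and both refute the contraction property with the pair $\N$ and $\N - \{x\}$, where the paper fixes $x$ large enough to contradict a hypothetical constant $c < 1$ while you equivalently let the ratio $1/(1 + 1/N^2)$ tend to $1$.
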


\begin{proof} The first affirmation follows directly from the definition of the metric $d$. For the second, suppose by contradiction that exists $c \in [0, 1)$ such that ${|o(A) - o(B)| \leq c \cdot d(A, B), \ \forall \ A, \ B \in \A}$. Clearly $c \neq 0$; in other case all bases would have the same order. So $0 < c < 1$. Let $x \in \N - \{0, 1\}$ such that $c \cdot \left(1 + \difrac{1}{x^2}\right) < 1$. If $A = \N$ e $B = \N - \{x\}$, then $1 = |o(A) - o(B)| \leq c \cdot d(A, B) = c \cdot \left(1 + \difrac{1}{x^2}\right) < 1$, contradiction. \end{proof}

\begin{lema} \label{dibigcapdibigcup}

Let $\{A_n\}_{n \in \N}$ a sequence in $\A$. Suppose that exists $A \subseteq \N$ with the following property: given $\varepsilon > 0$ exists $n_0 \in \N$ such that if $n \geq n_0$, then $\soma{A_n}{A} < \varepsilon$. So $A = \dibigcup_{m = 0}^{\infty} \dibigcap_{n = m}^{\infty} A_n$.

\end{lema}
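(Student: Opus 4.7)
The plan is to prove set equality by characterizing pointwise membership. The right-hand side $\bigcup_{m=0}^{\infty} \bigcap_{n=m}^{\infty} A_n$ is precisely the set of elements that eventually lie in every $A_n$, so it suffices to show that for each $x \in \N$, $x \in A$ if and only if $x \in A_n$ for all sufficiently large $n$. The two inclusions then fall out immediately.

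To establish this equivalence I would fix $x \in \N$ and apply the hypothesis with the distinguished choice $\varepsilon = 1/x^2$. This produces some $n_0$ such that $\sum_{\alpha \in A_n \triangle A} 1/\alpha^2 < 1/x^2$ for every $n \geq n_0$. The decisive observation is that $x$ cannot then belong to $A_n \triangle A$ for any such $n$: otherwise the single term $1/x^2$ would already appear in the sum and contradict the strict upper bound $1/x^2$. Hence for all $n \geq n_0$, the two conditions $x \in A_n$ and $x \in A$ coincide.

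From this equivalence both inclusions follow cleanly. If $x \in A$, then $x \in A_n$ for every $n \geq n_0$, so $x \in \bigcap_{n \geq n_0} A_n$ and thus $x$ belongs to the union. Conversely, if $x$ lies in $\bigcap_{n \geq m} A_n$ for some $m$, picking any index $n \geq \max(m, n_0)$ and reading the equivalence backwards gives $x \in A$.

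I do not anticipate any real obstacle: the argument is a direct unpacking of the hypothesis, leveraging the same localization trick $\varepsilon = 1/x^2$ that isolates the contribution of a single element, already used in the preceding lemma on initial segments. The only mild care needed is to ensure that $1/x^2$ is well defined in the sum, which is guaranteed here since the index set of summation consists of positive integers.
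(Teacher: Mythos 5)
Your proof is correct and takes essentially the same approach as the paper: the paper likewise applies the hypothesis with $\varepsilon = \frac{1}{x^2}$ to get $A \cap \overleftarrow{x} = A_n \cap \overleftarrow{x}$ for all large $n$, which is exactly your single-term observation phrased via initial segments, and then deduces both inclusions. The only cosmetic difference is that the paper treats the elements $0$ and $1$ separately, using that every additive basis contains $\{0,1\}$.
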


\begin{proof} Let $B = \dibigcup_{m = 0}^{\infty} \dibigcap_{n = m}^{\infty} A_n$. We will show that $A \subseteq B$ and $A \supseteq B$.

$(\subseteq)$ Let $x \in A$. Like any addictive base contains $\{0, 1\}$, then $\{0, 1\} \subseteq B$. Thereby, we can assume without loss of generality that $x \geq 2$. By hypothesis, given $\varepsilon = \difrac{1}{x^2}$ exists $n_0 \in \N$ such that if $n \geq n_0$, then $\soma{A_n}{A} < \difrac{1}{x^2}$. In particular, $A \cap \overleftarrow{x} = A_n \cap \overleftarrow{x}, \ \forall \ n \geq n_0$. So $x \in B$.

$(\supseteq)$ Let $x \in B, \ x \geq 2$. Then exists $m \in \N$ such that if $n \geq m$, then $x \in A_n$. By hypothesis, given $\varepsilon = \difrac{1}{x^2}$ exists $n_0 \in \N$ such that if $n \geq n_0$, then $\soma{A_n}{A} < \difrac{1}{x^2}$. As before, $A \cap \overleftarrow{x} = A_n \cap \overleftarrow{x}, \ \forall \ n \geq n_0$. If $k = \max\{m, n_0\}$, then $x \in A_k \Rightarrow x \in A$. \end{proof}

\begin{defi}

If $h \in \N^*$, we define $\A_h = \{A \in \A; \ o(A) = h\}$.

\end{defi}

\begin{prop} \label{proposicaodosan}

The sets $\A_h$ satisfy the following properties:

\begin{enumerate}
  \item $\A_h$ is open and closed in $\A$.
  \item $\A_h$ is totally bounded.
  \item $\partial(\A_h) = \emptyset$.
  \item $\A_h$ is complete if, and only if, $h = 1$.
\end{enumerate}

\end{prop}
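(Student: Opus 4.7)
My plan is to treat the four parts in turn, with the real content concentrated in (2) and (4).

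\textbf{Parts (1) and (3).} From the construction of $d$ we have $|o(A) - o(B)| \leq d(A, B)$, and orders are integers, so any $B$ with $d(A, B) < 1$ satisfies $o(B) = o(A)$. Consequently the open ball of radius $1$ centered at any point of $\A_h$ lies in $\A_h$, so $\A_h$ is open; its complement $\dibigcup_{k \neq h} \A_k$ is a union of open sets, so $\A_h$ is also closed. Part (3) is immediate because a clopen set has empty boundary.

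\textbf{Part (2).} Given $\varepsilon > 0$, I would choose $N \in \N$ large enough that $\disum_{\alpha > N} \frac{1}{\alpha^2} < \varepsilon$. The map $A \mapsto A \cap \overleftarrow{N}$ takes at most $2^{N+1}$ values, so I pick one representative $A_S \in \A_h$ for each realized value $S$. An arbitrary $A \in \A_h$ agrees on $\overleftarrow{N}$ with its representative, so the symmetric difference is supported above $N$ and $d(A, A_S) < \varepsilon$. This produces a finite $\varepsilon$-net inside $\A_h$.

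\textbf{Part (4).} The \emph{if} direction is trivial: $o(A) = 1$ forces $A \supseteq \N_0$, so $\A_1 = \{\N_0\}$ is a one-point space and hence complete. For the \emph{only if} direction, given $h \geq 2$ I would consider
\[
A_n = \{0, 1, 2, \ldots, n\} \cup \{hn+1, hn+2, hn+3, \ldots\}, \qquad n \in \N,
\]
and check three things. First, $o(A_n) = h$: the integer $hn$ admits no representation using elements of the upper block (each exceeds $hn$), and the most economical representation from the lower block uses $h$ copies of $n$; every other integer is representable in at most $h$ terms. Second, $(A_n)$ is Cauchy, because for $n < m$ the symmetric difference $A_n \triangle A_m$ is supported above $n$ and the tail $\disum_{\alpha > n} \frac{1}{\alpha^2} = O(1/n)$. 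Third, the set-theoretic candidate limit $\dibigcup_{m=0}^{\infty} \dibigcap_{n \geq m} A_n$ equals $\N_0$, since every fixed integer lies in $A_n$ for all $n$ sufficiently large. By Lemma~\ref{dibigcapdibigcup} any limit of $(A_n)$ in $\A$ must coincide with this set, whose order is $1 \neq h$. Hence $(A_n)$ admits no limit in $\A_h$.

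\textbf{The main obstacle.} The delicate step is exhibiting, for each $h \geq 2$, a Cauchy sequence of order-$h$ bases whose set-theoretic limit has a different order. The choice $A_n = [0, n] \cup [hn+1, \infty)$ succeeds because the bottom block exhausts $\N_0$ as $n \to \infty$, while the forbidden gap $(n, hn]$ stays wide enough to pin the order of each $A_n$ to exactly $h$; once this sequence is in hand, verifying the Cauchy property and identifying the set-limit is mechanical.
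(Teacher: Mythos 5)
Your proof is correct and follows essentially the same route as the paper: clopenness via the integrality of the order term in $d$, a finite $\varepsilon$-net of representatives indexed by intersections with an initial segment, and for $h \geq 2$ the same Cauchy sequence $A_n = \{0,\ldots,n\} \cup \{hn+1,\ldots\}$ combined with Lemma~\ref{dibigcapdibigcup} to show its only possible limit is $\N$, which lies outside $\A_h$. You even fill in details the paper leaves implicit (the Cauchy estimate and the verification that $o(A_n)=h$), so nothing further is needed.
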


\begin{proof} \textit{1.} As the function $\begin{array}[t]{cccc} o: & \A & \rightarrow & \N^* \\ & A & \mapsto & o(A) \end{array}$ is continuous in $\A$ e $\A_h = o^{-1}(\{h\})$, follows that $\A_h$ is open and closed in $\A$.

\textit{2.} Let $\varepsilon > 0$ and let $m \in \N - \{0, 1\}$ such that $\disum_{\alpha > m} \difrac{1}{\alpha^2} < \varepsilon$. For each $\Gamma \subseteq \{2, \ldots, m\}$ choose, if possible, $A_\Gamma \in \A_h$ such that $A_\Gamma \cap \{2, \ldots, m\} = \Gamma$. Let $A \in \A_h$. So $$A \cap \{2, \ldots, m\} = A_\Gamma \cap \{2, \ldots, m\},$$ for some $\Gamma \subseteq \{2, \ldots, m\}$. So, $d(A, A_\Gamma) = \soma{A}{A_\Gamma} \leq \disum_{\alpha > m} \difrac{1}{\alpha^2} < \varepsilon$. This implies that $\A_h$ is totally bounded.

\textit{3.} Suppose that exists $A \in \partial(\A_h)$. Then given $r = \difrac{1}{2}$ exists $B_1 \in \A_h$ and $B_2 \in \fantasma{\A}{h}{C}$ such that $B_1, B_2 \in \B(A, r)$. So $d(B_1, A) < \difrac{1}{2} \Rightarrow o(B_1) = o(A)$ and $d(B_2, A) < \difrac{1}{2} \Rightarrow o(B_2) = o(A)$, contradiction.

\textit{4.} Clearly $\A_1$ is complete, because $\A_1 = \{\N\}$. Now, for each $h \geq 2$, we will build a Cauchy sequence $\{A_n\}_{n \in \N^*} \subseteq \A_h$ such that $\{A_n\}_{n \in \N^*}$ does not converge in $\A_h$. Fixed $h \geq 2$, consider the sequence $\{A_n\}_{n \in \N^*}$ given by $$A_n = \{0, 1, \ldots, n, hn + 1, \ldots\}.$$ So $o(A_n) = h, \ \forall \ n \in \N^*$. If $\{A_n\}_{n \in \N^*}$ converges in $\A_h$, it would exists $A \in \A_h$ such that $A_n \rightarrow A$. By lemma \ref{dibigcapdibigcup}, $A = \dibigcup_{m = 1}^{\infty} \dibigcap_{n = m}^{\infty} A_n = \N \Rightarrow A_n \rightarrow \N$, contradiction, because $o(A_n) = h \geq 2, \ \forall \ n \in \N^*$. \end{proof}

From the previous proposition we get two corollaries:

\begin{cor} \label{cortotalmentelimitado}

The following statements are equivalent with respect to a set $X \subseteq \A$:

\begin{enumerate}
  \item $X$ is totally bounded.
  \item $X$ é limited.
  \item Exists $M > 0$ such that $o(A) \leq M, \ \forall \ A \in X$.
\end{enumerate}

\end{cor}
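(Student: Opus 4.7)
The plan is to prove the cycle $(1) \Rightarrow (2) \Rightarrow (3) \Rightarrow (1)$, leaning on Proposition \ref{proposicaodosan} for the crucial step.

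First, the implication $(1) \Rightarrow (2)$ is a general fact about metric spaces: any totally bounded set is bounded, since it can be covered by finitely many balls of radius, say, $1$, whose union has finite diameter.

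Next, for $(2) \Rightarrow (3)$, assume $X$ is bounded and (without loss of generality) nonempty. Fix any $A_0 \in X$ and pick $R > 0$ with $d(A_0, A) \leq R$ for every $A \in X$. Since $o$ is a weak contraction (indeed this follows immediately from the definition of $d$), we have
\begin{equation*}
|o(A) - o(A_0)| \leq d(A_0, A) \leq R,
\end{equation*}
so $o(A) \leq o(A_0) + R$ for every $A \in X$; take $M = o(A_0) + R$.

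The main implication is $(3) \Rightarrow (1)$, and it is where Proposition \ref{proposicaodosan} does the work. If $o(A) \leq M$ for all $A \in X$, then
\begin{equation*}
X \subseteq \bigcup_{h=1}^{\lfloor M \rfloor} \A_h.
\end{equation*}
Each $\A_h$ is totally bounded by item 2 of Proposition \ref{proposicaodosan}, and a finite union of totally bounded sets is totally bounded (given $\varepsilon > 0$, take a finite $\varepsilon$-net in each $\A_h$ and union them). Hence $X$ is a subset of a totally bounded set, and therefore totally bounded itself.

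The only subtlety I foresee is the step $(2) \Rightarrow (3)$, where one might worry that bounding $d(A_0, A)$ does not pin down $o(A)$ because of the extra nonnegative term $\sum_{\alpha \in A_0 \triangle A} 1/\alpha^2$; but this term only helps, since $|o(A_0) - o(A)| \leq d(A_0, A)$ regardless. All the other steps are routine once Proposition \ref{proposicaodosan} is in hand.
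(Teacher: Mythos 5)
Your proof is correct and follows essentially the same route as the paper: the cycle $1 \Rightarrow 2 \Rightarrow 3 \Rightarrow 1$, with the key step $3 \Rightarrow 1$ covering $X$ by $\bigcup_{h=1}^{\lfloor M \rfloor} \A_h$ and invoking total boundedness of each $\A_h$. The only cosmetic difference is in $2 \Rightarrow 3$, where the paper centers the bounding ball at $\N$ (using $o(\N)=1$) while you use an arbitrary $A_0 \in X$; both rest on the same inequality $|o(A)-o(B)| \leq d(A,B)$.
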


\begin{proof} $(\textit{1.} \Rightarrow \textit{2.})$ Trivial.

$(\textit{2.} \Rightarrow \textit{3.})$ As $X$ is limited, exists $r > 0$ such that $X \subseteq \B(\N, r)$. So, if $A \in X$ e $M = r + 1 > 0$, then $\ o(A) = |o(A) - 1| + 1 \leq d(A, \N) + 1 < r + 1 = M$.

$(\textit{3.} \Rightarrow \textit{1.})$ If exists $M > 0$ such that $o(A) \leq M, \ \forall \ A \in X$, then $X \subseteq \dibigcup_{h = 1}^{\lfloor M \rfloor} \A_h$, in other words, $X$ is contained in a totally bounded set. Then $X$ is totally bounded. \end{proof}

\begin{cor}

$\A$ is not connected, neither complete nor compact.

\end{cor}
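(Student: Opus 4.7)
My plan is to handle the three assertions separately, using Proposition \ref{proposicaodosan} and Lemma \ref{dibigcapdibigcup} as the main tools.

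For non-connectedness, I would simply observe that by part 1 of Proposition \ref{proposicaodosan}, each $\A_h$ is clopen in $\A$. Taking for instance the partition $\A = \A_1 \sqcup (\A \setminus \A_1)$, both pieces are open, and both are nonempty: $\N \in \A_1$, while $\N^2 \in \A_4 \subseteq \A \setminus \A_1$ (by Lagrange's four-square theorem, as already used in the example). This exhibits a disconnection of $\A$.

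The non-completeness is the main content and requires real work. I would reuse the Cauchy sequence built in part 4 of Proposition \ref{proposicaodosan}: fix any $h \geq 2$ and set $A_n = \{0, 1, \ldots, n, hn+1, hn+2, \ldots\}$. The key point is to argue that this sequence is Cauchy not merely in $\A_h$ but in the ambient space $\A$. For $n \leq m$ the symmetric difference $A_n \triangle A_m$ is contained in $\{n+1, n+2, \ldots\}$, so
$$d(A_n, A_m) = \soma{A_n}{A_m} \leq \disum_{\alpha = n+1}^{\infty} \difrac{1}{\alpha^2} \xrightarrow[n \to \infty]{} 0.$$
Now suppose, for contradiction, that $A_n \to A$ for some $A \in \A$. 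Lemma \ref{dibigcapdibigcup} applies (since convergence in the metric forces $\soma{A_n}{A} \to 0$), and a direct computation gives $\dibigcup_{m=1}^\infty \dibigcap_{n=m}^\infty A_n = \N$: every $k \in \N$ satisfies $k \leq n$ for all $n \geq k$, hence $k \in A_n$ for all $n \geq k$, so $k$ belongs to the limsup. But then $A = \N$, which is impossible because $d(A_n, \N) \geq |o(A_n) - o(\N)| = h - 1 \geq 1$ for every $n$, contradicting $A_n \to \N$. So $\{A_n\}$ is Cauchy without a limit in $\A$, and $\A$ is not complete.

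Finally, non-compactness is immediate: a compact metric space is always complete, so the previous paragraph already rules out compactness of $\A$. (One could alternatively invoke Corollary \ref{cortotalmentelimitado}, noting that $\A$ contains bases of arbitrarily large order and is therefore not even totally bounded.) The only step with any subtlety is verifying that the candidate Cauchy sequence cannot converge anywhere in $\A$; once Lemma \ref{dibigcapdibigcup} pins the only possible limit to $\N$, the order-jump $h - 1 \geq 1$ closes the argument.
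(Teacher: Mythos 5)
Your proposal is correct, and for non-connectedness and non-compactness it coincides with the paper's argument (a clopen decomposition isolating $\A_1$, and ``compact implies complete''). For non-completeness you take a genuinely more hands-on route than the paper: the paper argues abstractly that if $\A$ were complete then $\A_h$, being closed in $\A$ by Proposition \ref{proposicaodosan} (item 1), would itself be complete, contradicting item 4 of that proposition; you instead re-run the item 4 construction inside the ambient space, verifying explicitly that $A_n = \{0, 1, \ldots, n, hn+1, \ldots\}$ is Cauchy in $\A$ (an estimate the paper leaves implicit) and excluding a limit anywhere in $\A$, not merely in $\A_h$, by using Lemma \ref{dibigcapdibigcup} to force the only candidate limit to be $\N$ and then invoking the order gap $d(A_n, \N) \geq h - 1 \geq 1$. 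What your version buys is self-containedness: it does not need the clopenness of $\A_h$ nor the general fact that a closed subspace of a complete space is complete, and it fills in details (the Cauchy estimate, the identification of the candidate limit in all of $\A$) that the paper's shorter deduction quietly delegates to the earlier proposition. One cosmetic slip: the set $\dibigcup_{m}\dibigcap_{n \geq m} A_n$ you compute is the liminf of the sequence of sets, not the limsup; this does not affect the argument, since what you actually prove is that every $k$ lies in $\dibigcap_{n \geq k} A_n$, hence in the union of intersections, which is exactly what Lemma \ref{dibigcapdibigcup} requires.
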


\begin{proof} $\A$ is not connected, because we can write $\A = \A_1 \cup \left(\dibigcup_{h = 2}^{\infty}\A_h \right)$. $\A$ is not complete too: if this were the case, $\A_h$, being a closed subset of $\A$, would be complete too. At least, as $\A$ is not complete, $\A$ is not compact. \end{proof}

The lemma below gives us another example of open and closed set in $\A$.

\begin{lema} \label{prop8deu3}

If $\Gamma \subseteq \N$ is finite, then $X = \{A \in \A; \ \Gamma \subseteq A\}$ is open and closed in $\A$.

\end{lema}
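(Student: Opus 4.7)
The plan is to leverage the preceding lemma, which tells us that small metric distance implies pointwise agreement up to some threshold. The key observation is that the condition $\Gamma \subseteq A$ only depends on which elements of $\overleftarrow{x}$ belong to $A$, where $x$ is any upper bound for $\Gamma$. Since $\Gamma$ is finite, such an $x$ exists, so membership in $X$ is determined by a "local" condition that is stable under small perturbations in the metric $d$.

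I would first dispose of the trivial case $\Gamma \subseteq \{0, 1\}$ (including $\Gamma = \emptyset$), in which $X = \A$ is both open and closed. Otherwise, set $x = \max(\Gamma) \geq 2$, so that $\Gamma \subseteq \overleftarrow{x}$. The plan is then to show that every $A \in \A$ has a ball of radius $\frac{1}{x^2}$ that is entirely contained in $X$ or entirely contained in $\A \setminus X$, depending on whether $A \in X$ or not. This simultaneously proves openness and that the complement is open, giving both conclusions at once.

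For the openness of $X$: take $A \in X$ and $B \in \A$ with $d(A, B) < \frac{1}{x^2}$. By the previous lemma, $A \cap \overleftarrow{x} = B \cap \overleftarrow{x}$. Since $\Gamma \subseteq A$ and $\Gamma \subseteq \overleftarrow{x}$, we get $\Gamma \subseteq A \cap \overleftarrow{x} = B \cap \overleftarrow{x} \subseteq B$, so $B \in X$. For the closedness of $X$ (i.e., openness of $\A \setminus X$): if $A \notin X$, there is some $\gamma \in \Gamma$ with $\gamma \notin A$, and $\gamma \leq x$. For any $B$ with $d(A, B) < \frac{1}{x^2}$, the same lemma gives $A \cap \overleftarrow{x} = B \cap \overleftarrow{x}$, and since $\gamma \in \overleftarrow{x}$ but $\gamma \notin A$, we conclude $\gamma \notin B$, so $B \notin X$.

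There is no real obstacle here; the entire proof is a direct application of the lemma once we observe that finiteness of $\Gamma$ lets us choose a uniform threshold $x$. The only subtlety is the edge case where $\max(\Gamma) \leq 1$, which is handled by noting that $\{0, 1\}$ is contained in every additive basis.
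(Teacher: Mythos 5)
Your proof is correct and follows essentially the same route as the paper: reduce to $x = \max\Gamma \geq 2$ and apply the lemma that $d(A,B) < \frac{1}{x^2}$ forces $A \cap \overleftarrow{x} = B \cap \overleftarrow{x}$. The only (cosmetic) difference is that you establish closedness by showing $\A \setminus X$ is open, whereas the paper argues via a convergent sequence in $X$; both rest on exactly the same estimate.
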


\begin{proof} The result is trivial in $\Gamma \subseteq \{0, 1\}$. So, suppose $\Gamma \nsubseteq \{0, 1\}$ and let $x = \max \Gamma \geq 2$. To show that $X$ is open, let $A \in X$. I claim that $\B\left(A, \difrac{1}{x^2}\right) \subseteq X$. In fact, if $B \in \B\left(A, \difrac{1}{x^2}\right)$, then $d(A, B) < \difrac{1}{x^2} \Rightarrow A \cap \overleftarrow{x} = B \cap \overleftarrow{x} \Rightarrow \Gamma \subseteq B \Rightarrow B \in X$.

To show that $X$ is closed, let $\{A_n\}_{n \in \N} \subseteq X$ such that $A_n \rightarrow A \in \A$. We need to show that $A \in X$. As $A_n \rightarrow A$, given $\varepsilon = \difrac{1}{x^2} > 0$ exists $n_0 \in \N$ such that if $n \geq n_0$, then $d(A, A_n) < \difrac{1}{x^2}$. In particular, $d(A, A_{n_0}) < \difrac{1}{x^2} \Rightarrow A \cap \overleftarrow{x} = A_{n_0} \cap \overleftarrow{x}$. As $A_{n_0} \in X$, then $\Gamma \subseteq A_{n_0} \Rightarrow \Gamma \subseteq A \Rightarrow A \in X$. \end{proof}

With the lemma \ref{prop8deu3} we can be more specific with regard to the question of how $\A$ is disconnected. For this, we recall the following definition:

\begin{defi} \index{espaço topológico!totalmente separado}

A topological space $(T, \tau)$ is totally separated if given two distinct points $u, \ v \in T$ exists two open and disjoint sets $U, \ V \in \tau$ such that $u \in U, \ v \in V$ and $T = U \cup V$.

\end{defi}

\begin{prop}

$\A$ is totally separated. In particular, $\A$ is totally disconnected.

\end{prop}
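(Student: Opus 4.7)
The plan is: given any two distinct $A, B \in \A$, I will exhibit a clopen partition $\A = U \cup V$, with $U \cap V = \emptyset$, $A \in U$ and $B \in V$. I split into two cases depending on whether $o(A) = o(B)$.

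If $o(A) \neq o(B)$, let $h = o(A)$. By item 1 of Proposition \ref{proposicaodosan}, $\A_h$ is clopen in $\A$. Taking $U := \A_h$ and $V := \A \setminus \A_h$ gives the desired partition: both are open, they are disjoint with union $\A$, we have $A \in U$, and since $o(B) \neq h$, also $B \in V$.

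If $o(A) = o(B)$ but $A \neq B$, then $A \triangle B$ is nonempty. Since every additive basis contains $\{0, 1\}$ (as already observed in the proof of Lemma \ref{dibigcapdibigcup}), we have $A \triangle B \subseteq \N \setminus \{0, 1\}$, so any $x \in A \triangle B$ lies in $\N$ and satisfies $x \geq 2$. Swapping the roles of $A$ and $B$ if necessary, assume $x \in A \setminus B$. Applying Lemma \ref{prop8deu3} to the finite set $\Gamma = \{x\} \subseteq \N$ shows that $X := \{C \in \A;\ x \in C\}$ is clopen in $\A$; taking $U := X$ and $V := \A \setminus X$ separates $A$ from $B$.

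For the "in particular" clause, I just invoke the standard topological implication: a totally separated space is totally disconnected, since any connected subset must be contained in one of the two pieces of every clopen partition, which forces connected components to be singletons. I do not anticipate any serious obstacle, because Proposition \ref{proposicaodosan} and Lemma \ref{prop8deu3} do the essential work. The only detail to watch is that in the equal-order case one must pick $x \in A \triangle B$ with $x \in \N$ so that Lemma \ref{prop8deu3} actually applies; this is automatic since $\{0, 1\}$ is common to every pair of bases and so cannot contribute to $A \triangle B$.
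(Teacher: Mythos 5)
Your proof is correct and essentially matches the paper's: the paper separates any two distinct bases directly via Lemma \ref{prop8deu3} applied to a single element $x \in A_1 \triangle A_2$ (necessarily $x \geq 2$ since $\{0,1\}$ lies in every basis), which is exactly your second case. Your first case (unequal orders, using the clopen sets $\A_h$) is sound but superfluous, because your second argument never uses $o(A) = o(B)$ and already covers every pair of distinct bases.
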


\begin{proof} Let $A_1, \ A_2 \in \A$, with $A_1 \neq A_2$. So $A_1 \triangle A_2 \neq \emptyset$. Let $x = \min (A_1 \triangle A_2)$ and suppose without loss of generality that $x \in A_1$. Write $$\A = \{A \in \A; \ x \in A\} \cup \{A \in \A; \ x \not \in A\}.$$ So, $A_1 \in \{A \in \A; \ x \in A\}, \ A_2 \in \{A \not \in \A; \ x \in A\}$ and both $\{A \in \A; \ x \in A\}$ as this $\{A \in \A; \ x \not \in A\}$ are open subsets of $\A$, by lemma \ref{prop8deu3}. So $\A$ is totally separated. As any totally separated topological space is totally disconnected, the corollary is proved. \end{proof}

Now we define a concept that will be very useful for our study.

\begin{defi} \index{primeiro pior caso}

If $A \in \A$, we define the first worst case $A$ por $$\fwc(A) = \displaystyle \min\{n \in \N; \ o_A(n) = o(A)\}.$$

\end{defi}

\begin{exem}

$\fwc(\N^2) = 7, \fwc(\N^3) = 23$ and $\fwc(A) = 0$ if, and only if, $A = \N$.

\end{exem}

Note that does not exists $A \in \A$ such that $\fwc(A) = 1$.

\begin{lema}

Let $A \in \A$. If $B \supseteq A$ and $o(A) = o(B)$, then $\fwc(B) \geq \fwc(A)$.

\end{lema}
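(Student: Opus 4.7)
The plan is to exploit the monotonicity of the representation-length function $o_A$ under enlargement of the basis. Writing $h = o(A) = o(B)$, I would first observe that since $B \supseteq A$, any representation of an integer $n$ as a sum of at most $k$ elements of $A$ is also a valid representation using elements of $B$. Hence
\[
o_B(n) \leq o_A(n) \quad \text{for every } n \in \N.
\]

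Next, I would set $m = \fwc(A)$. By definition of $\fwc$, every $n < m$ satisfies $o_A(n) < o(A) = h$. Combining this with the inequality above gives $o_B(n) \leq o_A(n) < h = o(B)$ for all $n < m$. Therefore no integer strictly smaller than $m$ can realize the equation $o_B(n) = o(B)$, and since $\fwc(B)$ is by definition the least such $n$, we conclude $\fwc(B) \geq m = \fwc(A)$.

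There is no real obstacle here; the argument is a one-line consequence of the monotonicity $o_B \leq o_A$ and the minimality built into the definition of $\fwc$. The only point requiring care is to note that, although we only know the inequality $o_B \leq o_A$ on individual integers, the hypothesis $o(A) = o(B)$ is what allows us to compare the thresholds defining $\fwc(A)$ and $\fwc(B)$ against the \emph{same} value $h$; without that assumption, the conclusion could fail.
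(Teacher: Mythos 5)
Your proof is correct and is essentially the same argument as the paper's: both rest on the monotonicity $o_B(n) \leq o_A(n)$ for $B \supseteq A$ together with the minimality in the definition of $\fwc$, the only difference being that the paper phrases it as a proof by contradiction while you argue directly.
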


\begin{proof} Suppose by contradiction that $\fwc(B) < \fwc(A)$. Then $o(A) = o(B) = o_B(\fwc(B)) \leq o_A(\fwc(B)) < o(A)$, contradiction. \end{proof}

\begin{prop}

The only isolated point of $\A$ is $\N$.

\end{prop}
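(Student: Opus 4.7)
The plan is to handle the two directions separately. That $\N$ is isolated is immediate from part 1 of Proposition \ref{proposicaodosan}: $\A_1 = \{\N\}$ is open in $\A$, so the singleton $\{\N\}$ is open. (Equivalently, $d(\N, B) < 1$ forces $|1 - o(B)| < 1$, so $o(B) = 1$ and thus $B = \N$.) For the converse, I must show every $A \in \A$ with $A \neq \N$ is an accumulation point. Fix such an $A$, set $h = o(A) \geq 2$ and $f = \fwc(A) \geq 2$. The idea is to produce a sequence $(B_n)$ by perturbing $A$ at a single large integer $y_n \to \infty$, giving $A \triangle B_n = \{y_n\}$ and $d(A, B_n) = |o(A) - o(B_n)| + 1/y_n^2$. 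Because $|o(A) - o(B_n)|$ is a non-negative integer, forcing $d(A, B_n) \to 0$ requires $o(B_n) = h$ for all large $n$; this order-preservation is the main obstacle and dictates a case split based on how large the complement of $A$ is.

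If $\N \setminus A$ is infinite, I would pick $y_n \in \N \setminus A$ with $y_n > f$ and $y_n \to \infty$, and set $B_n = A \cup \{y_n\}$. Then $B_n \supseteq A$ automatically gives $o(B_n) \leq h$, and the crucial observation is that any representation of $f$ in $B_n$ cannot use the summand $y_n$, since all summands are non-negative and $y_n > f$. Hence $o_{B_n}(f) = o_A(f) = h$, forcing $o(B_n) = h$. In other words, $\fwc(A)$ serves as a protected witness to the order, immune to this kind of perturbation.

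If instead $\N \setminus A$ is finite, then $A$ is cofinite (and still $h \geq 2$, since $A \neq \N$ forces $o_A(x) \geq 2$ for any missing $x$). I would pick $y_n \in A$ with $y_n$ much larger than $\max(\N \setminus A)$ and $y_n \to \infty$, and set $B_n = A \setminus \{y_n\}$. Now $B_n \subseteq A$ gives $o(B_n) \geq h$, so it suffices to bound $o(B_n) \leq h$. For $k < y_n$ no representation of $k$ uses $y_n$, so $o_{B_n}(k) = o_A(k) \leq h$; for $k > y_n$, the cofiniteness of $A$ above $y_n$ forces $k \in B_n$, so $o_{B_n}(k) = 1$; and for $k = y_n$, cofiniteness again produces $a \in B_n$ with $y_n - a \in B_n$, giving $o_{B_n}(y_n) \leq 2 \leq h$. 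In both cases $B_n \neq A$ and $d(A, B_n) = 1/y_n^2 \to 0$, so $A$ is not isolated.
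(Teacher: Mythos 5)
Your proposal is correct and follows essentially the same route as the paper: $\N$ is isolated via the radius-$1$ ball, and for $A \neq \N$ one splits on whether $A^c$ is infinite (adjoin a large element beyond $\fwc(A)$, which cannot enter a representation of the worst case) or finite (delete a large element of the cofinite part, checking the order is unchanged). Your write-up is in fact slightly more explicit than the paper's in the cofinite case, where you verify $o(B_n)\leq h$ for all $k$ rather than only for $k = y_n$.
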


\begin{proof} $\N$ is an isolated point of $\A$ because $B(\N, 1) = \{\N\}$. In fact, if $d(A, \N) < 1$, then $o(A) = o(\N) = 1 \Rightarrow A = \N$. Now, let $A \in \A, \ A \neq \N$. Then $o(A) \geq 2$. Let $\varepsilon > 0$. We will show that $A$ ia an accumulation point of $\A$.

(Mudar $A^C$ para $A^c$.) If $A^c$ is infinite, let $n_0 \in \N$ such that $\disum_{\alpha > n_0} \difrac{1}{\alpha^2} < \varepsilon$ and let $n_1 = \max\{n_0, \fwc(A)\}$. As $A^C$ is infinite, exists $x \in \A^C$ such that $x > n_1$. So $B = A \cup \{x\}$ is such that $B \neq A, \ o(B) = o(A)$ (because $x > n_1 \geq \fwc(A)$) and $d(A, B) = \difrac{1}{x^2} < \disum_{\alpha > n_1} \difrac{1}{\alpha^2} \leq \disum_{\alpha > n_0} \difrac{1}{\alpha^2} < \varepsilon$.

If $A^C$ is finite, let $n_0 \in \N$ such that $\disum_{\alpha > n_0} \difrac{1}{\alpha^2} < \varepsilon$ and let $n_1 \in \N$ such that if $n \geq n_1$, then $n \in A$. If $x = \max\{n_0, n_1, \fwc(A)\}$, then $B = A - \{x + 1\}$ is such that $B \neq A, \ o(B) = o(A)$ (because $x \in A \Rightarrow o_B(x + 1) = 2 \leq o(A)$) and $d(A, B) = \difrac{1}{(x + 1)^2} < \disum_{\alpha > x} \difrac{1}{\alpha^2} \leq \disum_{\alpha > n_0} \difrac{1}{\alpha^2} < \varepsilon$. \end{proof}

\begin{prop}

The function $\begin{array}[t]{cccc} \fwc: & \A & \rightarrow & \N \\ & A & \mapsto & \fwc(A) \end{array}$ is continuous in $\A$, but is not uniformly continuous in $\A$.

\end{prop}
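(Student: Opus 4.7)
The plan is to establish continuity by showing that $\fwc$ is locally constant (which suffices because its values are integers), and to refute uniform continuity by exhibiting a sequence of pairs of bases whose distances tend to zero but whose $\fwc$ values keep jumping.

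For continuity at a point $A \in \A$, I would split on whether $A = \N$. If $A = \N$, then by the preceding proposition $\B(\N, 1) = \{\N\}$, so $\fwc$ is trivially constant on this neighborhood. Otherwise $h = o(A) \geq 2$ and $k = \fwc(A) \geq 2$, recalling that no additive basis can have $\fwc$ equal to $1$ since every basis contains $0$ and $1$. I would take $\delta = 1/k^2$ and claim that $\fwc \equiv k$ on $\B(A, \delta)$. For $B \in \B(A, \delta)$, the bound $d(A,B) < 1/k^2 \leq 1/4 < 1$ combined with the weak contractivity of $o$ forces $o(B) = o(A) = h$, and the earlier lemma yields $A \cap \overleftarrow{k} = B \cap \overleftarrow{k}$. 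The key observation is that $o_A(n)$ depends only on $A \cap \overleftarrow{n}$, because any representation of $n$ as a sum of elements of $A$ can only use summands $\leq n$. Hence $o_B(n) = o_A(n)$ for every $n \leq k$, which gives $o_B(n) < h$ for $n < k$ and $o_B(k) = h$, so $\fwc(B) = k$.

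For the failure of uniform continuity, I would use the family $A_n = \N \setminus \{n\}$ for $n \geq 2$. Each $A_n$ is an additive basis of order $2$, since $n = (n-1) + 1$ with both summands in $A_n$; and $\fwc(A_n) = n$, because every $m \neq n$ already lies in $A_n$, so that $o_{A_n}(m) = 1$. Consequently $d(A_n, A_{n+1}) = 1/n^2 + 1/(n+1)^2 \to 0$, while $|\fwc(A_n) - \fwc(A_{n+1})| = 1$ for every $n$, which rules out uniform continuity (take for instance $\varepsilon = 1/2$).

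The main subtlety I expect is the continuity step: one has to notice that coincidence of $A$ and $B$ on the initial segment $\overleftarrow{k}$ already implies that $o_A$ and $o_B$ coincide throughout $\{0, 1, \ldots, k\}$, so the first worst case cannot shift. Once this observation is in place, both halves of the statement reduce to short verifications.
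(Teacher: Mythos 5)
Your proposal is correct and follows essentially the same route as the paper: continuity via the radius $\delta = 1/\fwc(A)^2$ (using that $d(A,B) < 1$ forces $o(A)=o(B)$ and $d(A,B) < 1/\fwc(A)^2$ forces agreement on $\overleftarrow{\fwc(A)}$), and failure of uniform continuity via the pairs $\N \setminus \{x\}$, $\N \setminus \{x+1\}$ at distance $1/x^2 + 1/(x+1)^2$. The only difference is that you spell out the step the paper leaves implicit --- that $o_A(n)$ depends only on $A \cap \overleftarrow{n}$, so agreement on the initial segment pins down $\fwc$ --- which is a worthwhile clarification but not a different argument.
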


\begin{proof} Let $A \in \A, \ A \neq \N$ to avoid the trivial case. Given $\varepsilon > 0$, let ${\delta = \difrac{1}{\fwc(A)^2} > 0}$. So, if $d(A, B) < \delta \leq 1$, then $o(A) = o(B)$ and $A \cap \overleftarrow{\fwc(A)} = B \cap \overleftarrow{\fwc(A)}$. This implies $\fwc(A) = \fwc(B) \Rightarrow |\fwc(A) - \fwc(B)| = 0 < \varepsilon$. As $A$ is any additive basis, $\fwc$ is continuous in $\A$.

If $\fwc$ were uniformly continuous on $\A$, given $\varepsilon = \difrac{1}{2}$ it would exists $\delta > 0$ such that if $d(A, B) < \delta$, then $|\fwc(A) - \fwc(B)| < \difrac{1}{2}, \ \forall \ A, \ B \in \A$. In particular, this implies that $\fwc(A) = \fwc(B), \ \forall \ A, \ B \in \A$ such that $d(A, B) < \delta$. Let $x \in \N - \{0, 1\}$ such that $\difrac{1}{x^2} + \difrac{1}{(x + 1)^2} < \delta$. If $A = \N - \{x\}$ and $B = \N - \{x + 1\}$, then $o(A) = o(B) = 2, \ d(A, B) = \difrac{1}{x^2} + \difrac{1}{(x + 1)^2} < \delta$ and $\fwc(A) = x \neq x + 1 = \fwc(B)$, contradiction. \end{proof}

\begin{cor} \label{cordoppclimitado}

Let $X \subseteq \A$. If exists $M > 0$ such that $\fwc(A) \leq M, \ \forall \ A \in X$, then $\fwc(A) \leq M, \ \forall \ A \in \overline{X}$.

\end{cor}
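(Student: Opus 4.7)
The plan is to exploit the continuity of $\fwc : \A \to \N$ established in the preceding proposition, combined with the observation that the boundedness condition $\fwc(A) \leq M$ cuts out a closed subset of $\A$.

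First I would consider the set $K = \{n \in \N : n \leq M\}$. This is a finite subset of $\N$, hence closed in $\N$ with its usual metric. Since $\fwc$ is continuous, its preimage $Y := \fwc^{-1}(K) = \{A \in \A : \fwc(A) \leq M\}$ is therefore closed in $\A$.

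Next, the hypothesis of the corollary says exactly that $X \subseteq Y$. Since $Y$ is closed, taking closures gives $\overline{X} \subseteq \overline{Y} = Y$, which is the desired conclusion: $\fwc(A) \leq M$ for every $A \in \overline{X}$.

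There is essentially no obstacle: the statement is a two-line consequence of the continuity of $\fwc$ proved just above, together with the triviality that a finite subset of $\N$ is closed. A purely sequential formulation would be equally direct: given $A \in \overline{X}$ and a sequence $\{A_n\} \subseteq X$ with $A_n \to A$, continuity of $\fwc$ into the (discretely behaving) target $\N$ forces $\fwc(A_n) = \fwc(A)$ for all sufficiently large $n$, so the uniform bound $\fwc(A_n) \leq M$ transfers to $A$.
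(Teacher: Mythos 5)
Your proof is correct and rests on the same ingredients as the paper's: continuity of $\fwc$ together with the discreteness of $\N$, your closed-preimage formulation being only a minor repackaging of the paper's sequential argument (which you also reproduce in your final paragraph). Nothing to add.
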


\begin{proof} Let $A \in \overline{X}$. Then exists $\{A_n\}_{n \in \N} \subseteq X$ such that $A_n \rightarrow A$. As the function $\fwc$ is continuous in $\A, \ \fwc(A_n) \rightarrow \fwc(A)$. As all points of $\N$ are isolated, exists $n_0 \in \N$ such that $\fwc(A_n) = \fwc(A), \ \forall \ n \geq n_0$. So, $\fwc(A) = \fwc(A_{n_0}) \leq M$. \end{proof}

To continue our study of basic properties of the metric space $(\A, d)$ we can now define the concept of pseudoconvergence for sequences in $\A$.

\begin{defi} \index{pseudoconvergência}

Let $\{A_n\}_{n \in \N} \subseteq \A$. We say that $\{A_n\}_{n \in \N}$ pseudoconverges to $A \in \A$ if given $\varepsilon > 0$ exists $n_0 \in \N$ such that if $n \geq n_0$, then $\soma{A_n}{A} < \varepsilon$. We will denote this fact by $A_n \pseudo A$.

\end{defi}

It follows directly from the lemma \ref{dibigcapdibigcup} that if $\{A_n\}_{n \in \N} \subseteq \A$ is such that $A_n \pseudo A \in \A$, then $A = \dibigcup_{j = 0}^{\infty} \dibigcap_{m = j}^{\infty} A_m$. In particular, the limit of a pseudoconvergence, when exists, is unique.

\begin{prop}

Let $\{A_n\}_{n \in \N} \subseteq \A$. If $A_n \rightarrow A \in \A$, then $A_n \pseudo A$. In particular, $A = \dibigcup_{j = 1}^{\infty} \dibigcap_{m = j}^{\infty} A_m$.

\end{prop}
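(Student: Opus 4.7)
The plan is to observe that the implication is essentially immediate from the definition of the metric $d$, and then to invoke Lemma~\ref{dibigcapdibigcup} to obtain the explicit description of the limit.

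First, I would unpack both notions: convergence $A_n \to A$ means that for every $\varepsilon > 0$ there exists $n_0$ with $d(A_n, A) = |o(A_n) - o(A)| + \soma{A_n}{A} < \varepsilon$ for all $n \geq n_0$, while pseudoconvergence $A_n \pseudo A$ only asks for $\soma{A_n}{A} < \varepsilon$ for $n \geq n_0$. Since $|o(A_n) - o(A)| \geq 0$, one has the pointwise inequality
\[
\soma{A_n}{A} \leq d(A_n, A),
\]
so the same $n_0$ that witnesses metric convergence also witnesses pseudoconvergence. This yields $A_n \pseudo A$.

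For the second assertion, I would simply apply Lemma~\ref{dibigcapdibigcup} to the sequence $\{A_n\}$ and the set $A$: the hypothesis of that lemma is exactly the pseudoconvergence condition just obtained, so the lemma hands us $A = \dibigcup_{m=0}^{\infty} \dibigcap_{n=m}^{\infty} A_n$. The index shift to $j = 1$ in the statement is cosmetic, since the inner intersections form an increasing chain in $m$, so discarding the $m = 0$ term does not change the union.

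There is no substantial obstacle here; the only care needed is to notice that the metric dominates the symmetric-difference sum term-by-term, which is what allows metric convergence to be strictly stronger than pseudoconvergence (the converse failing precisely because the order term $|o(A_n)-o(A)|$ can stay large, as illustrated earlier by sequences like $A_n = \{0,1,\ldots,n,hn+1,\ldots\}$ pseudoconverging to $\N$ while the orders remain equal to $h \geq 2$).
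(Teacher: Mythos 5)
Your argument is correct and is essentially the paper's own proof: metric convergence dominates the symmetric-difference sum term (since $|o(A_n)-o(A)| \geq 0$), giving pseudoconvergence, and the explicit formula for $A$ then follows from Lemma~\ref{dibigcapdibigcup}, exactly as the paper notes immediately after defining pseudoconvergence. Your additional remark that dropping the $m=0$ term is harmless (the inner intersections increase with $m$) is a correct, if minor, clarification of the index discrepancy in the statement.
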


\begin{proof} As $A_n \rightarrow A$, dado $\varepsilon > 0$ exists $n_0 \in \N$ such that $n \geq n_0$, then $d(A, A_n) < \varepsilon$. So, if $n \geq n_0, \ \soma{A_n}{A} \leq d(A_n, A) < \varepsilon$. Then $A_n \pseudo A$. \end{proof}

The reciprocal of the previous proposition is not true. In fact, consider the sequence of additive basis $A_n = \N - \{n\}, \ \forall \ n \in \N - \{0, 1\}$. Of course $\{A_n\}_{n \in \N - \{0, 1\}}$ diverges. However, $A_n \pseudo \N$. In fact, given $\varepsilon > 0$, let $n_0 \in \N - \{0, 1\}$ such that $\difrac{1}{\fantasma{n}{0}{2}} < \varepsilon$. So, if $n \geq n_0, \ \soma{A_n}{\N} = \difrac{1}{n^2} \leq \difrac{1}{\fantasma{n}{0}{2}} < \varepsilon \Rightarrow A_n \pseudo \N$.

\begin{lema} \label{pseudoconvergeordemmaiorigual}

Let $\{A_n\}_{n \in \N} \subseteq \A$. If $A_n \pseudo A \in \A$, then exists $n_0 \in \N$ such that if $n \geq n_0$, then $o(A_n) \geq o(A)$.

\end{lema}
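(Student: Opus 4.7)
The plan is to localize around an integer that witnesses the order of $A$. First we dispose of the case $A = \N$, where $o(A) = 1$ and the conclusion is trivial; otherwise we set $h = o(A) \geq 2$ and $k = \fwc(A) \geq 2$, so that by the very definition of $\fwc$, $k$ cannot be represented with fewer than $h$ summands from $A$, i.e.\ $o_A(k) = h$.

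The first substantive step will be to convert pseudoconvergence into a set-agreement statement on the finite initial segment $\overleftarrow{k}$. Applying the hypothesis with $\varepsilon = 1/k^2$ produces $n_0$ such that $\soma{A_n}{A} < 1/k^2$ whenever $n \geq n_0$. The contrapositive observation from the second lemma of this section applies directly: if any $y \in (A_n \triangle A) \cap \overleftarrow{k}$ existed, the sum on the left would already be $\geq 1/y^2 \geq 1/k^2$; so for every $n \geq n_0$ we conclude $A_n \cap \overleftarrow{k} = A \cap \overleftarrow{k}$.

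The finish is a short contradiction argument. Assume some $n \geq n_0$ satisfies $o(A_n) < h$. Then $k$ admits a representation $k = a_1 + \cdots + a_j$ with $a_i \in A_n$ and $j \leq o(A_n) < h$. Since the $a_i$ are non-negative integers summing to $k$, each $a_i \in \overleftarrow{k}$, hence $a_i \in A_n \cap \overleftarrow{k} = A \cap \overleftarrow{k} \subseteq A$. This forces $o_A(k) \leq j < h$, contradicting $o_A(k) = h$, and therefore $o(A_n) \geq h = o(A)$ for all $n \geq n_0$.

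I expect the main obstacle to be identifying the right test integer. Pseudoconvergence controls only the finite-initial-segment behaviour of $A_n$, so a direct attack on $o(A_n)$ via the tail of $A_n \triangle A$ is hopeless; the key insight is that $\fwc(A)$ hands us a single integer whose every representation lives entirely inside $\overleftarrow{\fwc(A)}$ and nevertheless requires $o(A)$ summands from $A$, precisely aligning the obstruction with what pseudoconvergence can detect.
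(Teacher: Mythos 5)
Your proof is correct and follows essentially the same route as the paper: both use $\fwc(A)$ as the test integer, apply pseudoconvergence with $\varepsilon$ of size $1/\fwc(A)^2$ (the paper uses $x=\fwc(A)+1$ instead of splitting off the case $A=\N$) to get agreement of $A_n$ and $A$ on the initial segment, and then conclude $o_{A_n}(\fwc(A)) = o_A(\fwc(A)) = o(A)$, hence $o(A_n)\geq o(A)$. Your contradiction argument just spells out explicitly why representations of $\fwc(A)$ only involve elements $\leq \fwc(A)$, which the paper leaves implicit.
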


\begin{proof} Let $x = \fwc(A) + 1$. As $A_n \pseudo A$, given $\varepsilon = \difrac{1}{x^2}$ exists $n_0 \in \N$ such that if $n \geq n_0$, then $\soma{A_n}{A} < \difrac{1}{x^2}$. In particular, $A_n \cap \overleftarrow{x} = A \cap \overleftarrow{x}, \ \forall \ n \geq n_0$. So, if $n \geq n_0$, then $o_{A_n}(\fwc(A)) = o_A(\fwc(A)) = o(A)\Rightarrow o(A_n) \geq o(A), \ \forall \ n \geq n_0$. \end{proof}

\begin{teo} \label{proppseudomaisppclimitado}

Let $\{A_n\}_{n \in \N} \subseteq \A$. In $A_n \pseudo A \in \A$ and if exists $M > 0$ such that $\fwc(A_n) \leq M, \ \forall \ n \in \N$, then $A_n \rightarrow A$.

\end{teo}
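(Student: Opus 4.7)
The plan is to exploit the fact that the metric $d$ splits as $|o(A_n)-o(A)| + \soma{A_n}{A}$, so pseudoconvergence already kills the second summand; the whole game is to show that the orders $o(A_n)$ eventually equal $o(A)$. The bounded $\fwc$ hypothesis is exactly what is needed to couple the set-theoretic closeness (pseudoconvergence) with the order functional.

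First, I would replace $M$ by $\lceil M\rceil \in \N$ so that I may treat the bound as a natural number, and invoke Lemma~\ref{pseudoconvergeordemmaiorigual} to obtain some $n_0$ with $o(A_n)\geq o(A)$ for $n\geq n_0$. Next, applying the definition of pseudoconvergence with $\varepsilon=1/M^2$, there exists $n_1\geq n_0$ such that $\soma{A_n}{A}<1/M^2$ for all $n\geq n_1$; by the second part of the first lemma of the section, this forces the equality
\[
A_n\cap\overleftarrow{M}=A\cap\overleftarrow{M},\qquad\forall\,n\geq n_1.
\]

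Now comes the key step that ties the $\fwc$ bound to the order. For each $n\geq n_1$ the assumption $\fwc(A_n)\leq M$ produces an integer $k_n\leq M$ with $o_{A_n}(k_n)=o(A_n)$. Any representation of $k_n$ as a sum of elements of a basis uses only summands $\leq k_n\leq M$, so the set of representations of $k_n$ by $A_n$ coincides with the set of representations by $A$, giving $o_{A_n}(k_n)=o_A(k_n)$. Consequently
\[
o(A_n)=o_{A_n}(k_n)=o_A(k_n)\leq o(A),
\]
which together with the reverse inequality already obtained yields $o(A_n)=o(A)$ for every $n\geq n_1$.

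Having settled this, for $n\geq n_1$ we have $d(A_n,A)=\soma{A_n}{A}$, which is $<\varepsilon$ by construction, so $A_n\to A$ in $(\A,d)$. The main conceptual obstacle is exactly the middle step: realizing that $\fwc(A_n)\leq M$ lets the order of $A_n$ be read off from the finite window $\overleftarrow{M}$, on which pseudoconvergence already forces agreement with $A$; everything else is a direct combination of the two hypotheses with the previous lemma.
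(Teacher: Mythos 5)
Your proof is correct and takes essentially the same route as the paper: reduce the problem to showing that $o(A_n) = o(A)$ for all large $n$, obtain $o(A_n) \geq o(A)$ from Lemma~\ref{pseudoconvergeordemmaiorigual}, and use the bound on $\fwc$ together with the window agreement $A_n \cap \overleftarrow{M} = A \cap \overleftarrow{M}$ (forced by pseudoconvergence) to control the order, after which $d(A_n, A)$ reduces to the symmetric-difference sum. The only differences are cosmetic: you derive $o(A_n) \leq o(A)$ directly from the witness $k_n = \fwc(A_n) \leq M$ whereas the paper argues by contradiction via a subsequence with $o(A_m) > o(A)$ and unbounded $\fwc$, and in your final line you should, strictly speaking, rerun the pseudoconvergence estimate for an arbitrary $\varepsilon > 0$ once the equality of orders is secured, rather than reusing $\varepsilon = 1/M^2$.
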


\begin{proof} I claim that it is sufficient to show that there is $n_1 \in \N$ such that if $n \geq n_1$, then $o(A) = o(A_n)$. In fact, with this, given $A_n \pseudo A$, then given $\varepsilon > 0$ exists $n_0 \in \N$ such that $n \geq n_0$, then $\soma{A_n}{A} < \varepsilon$. If $n_2 = \max\{n_0, n_1\}$ and if $n \geq n_2$, then $d(A_n, A) = \soma{A_n}{A} < \varepsilon \Rightarrow A_n \rightarrow A$.

By the previous lemma, exists $n_3 \in \N$ such that if $n \geq n_3$, then $o(A_n) \geq o(A)$. Suppose by contradiction that does not exists $n_1 \in \N$ such that if $n \geq n_1$, then $o(A) = o(A_n)$. Thereby there is a subsequence $\{A_m\}_{m \in \N} \subseteq \{A_n\}_{n \in \N}$ such that $o(A_m) > o(A), \ \forall \ m \in \N$.

So, let $y \in \N^*$. As $A_n \pseudo A$, given $\varepsilon = \difrac{1}{y^2}$ exists $n_4 \in \N$ such that if $n \geq n_4$, then $\soma{A_n}{A} < \difrac{1}{y^2}$. In particular, $A_n \cap \overleftarrow{y} = A \cap \overleftarrow{y}, \ \forall \ n \geq n_4$. If $m \in \N$ is such that $m \geq n_4$, then $A \cap \overleftarrow{y} = A_m \cap \overleftarrow{y}$. If $z \in \N$, $z \leq y$, then $o_{A_m}(z) = o_A(z) \leq o(A)$. As $o(A_m) > o(A)$, then $\fwc(A_m) > y$. As $y$ is arbitrarily, does not exists $M > 0$ such that $\fwc(A_n) \leq M, \ \forall \ n \in \N$. Contradiction. \end{proof}

\begin{teo} \label{teosequenciadecauchy}

Every Cauchy sequence in $\A$ pseudoconverges.

\end{teo}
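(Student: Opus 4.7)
The plan is to build the natural limit candidate $A = \bigcup_{m=0}^{\infty} \bigcap_{n=m}^{\infty} A_n$, which is the only set to which $\{A_n\}$ could pseudoconverge by lemma \ref{dibigcapdibigcup}, then verify first that $A \in \A$ and second that $\soma{A_n}{A} \to 0$.

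First, since $|o(A_n) - o(A_m)| \leq d(A_n, A_m)$ and the orders take values in the discrete set $\N^*$, the sequence $\{o(A_n)\}$ is a Cauchy sequence in a discrete space and so is eventually constant at some value $h$. Next, for each fixed $k \in \N$, applying the Cauchy condition with $\varepsilon = 1/k^2$ and invoking the second half of the lemma after Theorem 3.1, there is an index $n_0(k)$ such that $A_n \cap \overleftarrow{k}$ is independent of $n$ for all $n \geq n_0(k)$; a short check against the definition of $A$ then gives $A \cap \overleftarrow{k} = A_n \cap \overleftarrow{k}$ for all $n \geq n_0(k)$.

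To see $A \in \A$, observe that the quantity $o_{A_n}(k)$ depends only on $A_n \cap \overleftarrow{k}$. Taking $n$ large enough that simultaneously $o(A_n) = h$ and $A_n \cap \overleftarrow{k} = A \cap \overleftarrow{k}$, we get $o_A(k) = o_{A_n}(k) \leq h$. Since $k$ is arbitrary, $A$ is an additive basis with $o(A) \leq h$, so $A \in \A$.

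For the pseudoconvergence itself, given $\varepsilon > 0$ pick $n_1$ by the Cauchy condition so that $\soma{A_n}{A_m} \leq d(A_n, A_m) < \varepsilon/2$ whenever $n, m \geq n_1$. Fix $n \geq n_1$ and any finite $F \subseteq \N$, and choose $m \geq \max\{n_1, n_0(\max F)\}$. Then $(A_n \triangle A) \cap F = (A_n \triangle A_m) \cap F$, so the partial sum $\sum_{\alpha \in (A_n \triangle A) \cap F} 1/\alpha^2$ is bounded by $\soma{A_n}{A_m} < \varepsilon/2$. Taking the supremum over finite $F$ yields $\soma{A_n}{A} \leq \varepsilon/2 < \varepsilon$, giving $A_n \pseudo A$.

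The main obstacle I anticipate is not the pseudoconvergence bound, which is a routine triangle-inequality bookkeeping once the candidate $A$ is fixed, but rather the verification that $A$ actually lies in $\A$: we need to transport the uniform order bound $h$ from the tail of the sequence to the liminf set. The decisive observation that unlocks this step is that $o_X(k)$ depends only on $X \cap \overleftarrow{k}$, which makes the local stabilization of $A_n \cap \overleftarrow{k}$ automatically yield the global bound $o(A) \leq h$.
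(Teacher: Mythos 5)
Your proposal is correct and follows essentially the same route as the paper: the same limit candidate $A = \dibigcup_{m = 0}^{\infty} \dibigcap_{n = m}^{\infty} A_n$, the same stabilization of orders and of $A_n \cap \overleftarrow{k}$ via the lemma after Theorem 3.1, and the same observation that $o_A(k) = o_{A_n}(k) \leq h$ gives $A \in \A$. The only cosmetic difference is the last estimate: you bound every finite partial sum of $\soma{A_n}{A}$ by $\soma{A_n}{A_m}$ and take a supremum, whereas the paper truncates at a threshold $z$ with $\disum_{\alpha > z} \difrac{1}{\alpha^2} < \varepsilon$ and notes that $A_n \triangle A$ contains no element $\leq z$; both are routine and equivalent.
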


\begin{proof} Let $\{A_n\}_{n \in \N} \subseteq \A$ a Cauchy sequence and let $A = \dibigcup_{m = 0}^{\infty} \dibigcap_{n = m}^{\infty} A_n$. We will show that $A \in \A$ and that $A_n \pseudo A$.

As $\{A_n\}_{n \in \N}$ is a Cauchy sequence, given $\varepsilon = \difrac{1}{2}$ exists $n_0 \in \N$ such that if $n, \ m \geq n_0$, then $d(A_n, A_m) < \difrac{1}{2}$. In particular, $o(A_n)= o(A_m), \ \forall \ n, \ m \geq n_0$. So exists $\dilim_{n \rightarrow \infty} o(A_n)$. Let $L = \dilim_{n \rightarrow \infty} o(A_n)$. To show that $A \in \A$ is sufficient to show that $o_A(x) \leq L, \ \forall \ x \in \N$. For this, let $x \in \N, \ x \geq 2$, because $\{0, 1\} \subseteq A$.

As $\{A_n\}_{n \in \N}$ is a Cauchy sequence, given $\varepsilon = \difrac{1}{x^2}$ exists $n_1 \in \N$ such that $n, \ m \geq n_1$, then $d(A_n, A_m) < \difrac{1}{x^2}$. In particular, $A_n \cap \overleftarrow{x} = A_m \cap \overleftarrow{x}, \ \forall \ n, \ m \geq n_1$. Let $n_2 = \max\{n_0, n_1\}$. I claim that $A_{n_2} \cap \overleftarrow{x} = A \cap \overleftarrow{x}$. In fact:

$(\subseteq)$ Let $y \in A_{n_2}, \ y \leq x$. We know that $\dibigcap_{m = n_2}^{\infty} A_m \subseteq A$. As $n_2 \geq n_1$, then $$A_{n_2} \cap \overleftarrow{x} = A_m \cap \overleftarrow{x}, \ \forall \ m \geq n_2.$$ In particular, $y \in A_m, \ \forall \ m \geq n_2 \Rightarrow y \in A$.

$(\supseteq)$ Let $y \in A, \ y \leq x$. As $y \in A$, exists $m \in \N$ such that $y \in \dibigcap_{n = m}^{\infty} A_n$. Let $n_3 = \max\{n_2, m\}$. As $n_3 \geq m$, then $y \in A_{n_3}$. As $n_3 \geq n_2$ and $y \leq x$, then $y \in A_{n_2}$.

So $o_A(x) = o_{A_{n_2}}(x) \leq o(A_{n_2}) = L$.

We will show now that $A_n \pseudo A$. For this, let $\varepsilon > 0$. Let $z \in \N^*$ such that $\disum_{\alpha > z} \difrac{1}{\alpha^2} < \varepsilon$. As $\{A_n\}_{n \in \N}$ is a Cauchy sequence, given $\varepsilon' = \difrac{1}{z^2}$ exists $n_4 \in \N$ such that if $n, \ m \geq n_4$, then $d(A_n, A_m) < \difrac{1}{z^2}$. In particular, $$A_n \cap \overleftarrow{z} = A_m \cap \overleftarrow{z}, \ \forall \ n, \ m \geq n_4.$$ In the same way that before, $A_{n_4} \cap \overleftarrow{z} = A \cap \overleftarrow{z}$. This implies $A \cap \overleftarrow{z} = A_n \cap \overleftarrow{z}, \ \forall \ n \geq n_4$. So, if $n \geq n_4$, then $\soma{A_n}{A} \leq \disum_{\alpha > z} \difrac{1}{\alpha^2} < \varepsilon$. So $A_n \pseudo A$. \end{proof}

The reciprocal of this theorem does not hold. For example, the sequence $\{A_n\}_{n \in N^*} \subseteq \A$ given by $$A_n = \{0, 1, \ldots, n, n^2 + 1, \ldots\}$$ is such that $A_n \pseudo \N$ but $\{A_n\}_{n \in N^*}$ is not a Cauchy sequence, because $o(A_n) = n, \ \forall \ n \in \N - \{0, 1\}$.

The lemma below enable us to give a fairly simple characterization of compact subsets of $\A$:

\begin{lema} \label{ppclimitadoehcompleto}

Let $X \subseteq \A$. If exists $M > 0$ such that $\fwc(A) \leq M, \ \forall \ A \in X$, then $\overline{X}$ is complete.

\end{lema}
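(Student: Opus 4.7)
The plan is to assemble a Cauchy sequence in $\overline{X}$ and show it converges by combining three tools already proved: Corollary \ref{cordoppclimitado} (the $\fwc$-bound passes to the closure), Theorem \ref{teosequenciadecauchy} (Cauchy sequences pseudoconverge), and Theorem \ref{proppseudomaisppclimitado} (pseudoconvergence plus a uniform $\fwc$-bound upgrades to convergence).

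First I would fix an arbitrary Cauchy sequence $\{A_n\}_{n \in \N} \subseteq \overline{X}$. By Corollary \ref{cordoppclimitado} applied to the hypothesis $\fwc(A) \leq M$ for all $A \in X$, we obtain $\fwc(A) \leq M$ for every $A \in \overline{X}$; in particular $\fwc(A_n) \leq M$ for all $n \in \N$.

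Next, Theorem \ref{teosequenciadecauchy} gives a pseudolimit $A = \dibigcup_{m = 0}^{\infty} \dibigcap_{n = m}^{\infty} A_n \in \A$ with $A_n \pseudo A$. Since the sequence $\{A_n\}$ satisfies the uniform bound $\fwc(A_n) \leq M$, Theorem \ref{proppseudomaisppclimitado} applies and yields $A_n \rightarrow A$ in $\A$.

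Finally, $\overline{X}$ is closed in $\A$ by construction, so the limit $A$ of the sequence $\{A_n\} \subseteq \overline{X}$ belongs to $\overline{X}$. Since every Cauchy sequence in $\overline{X}$ converges to a point of $\overline{X}$, the set $\overline{X}$ is complete. I do not expect any genuine obstacle here — the content of the lemma is essentially that the two earlier theorems fit together precisely once the $\fwc$-bound is transferred from $X$ to $\overline{X}$.
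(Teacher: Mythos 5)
Your proposal is correct and follows essentially the same route as the paper: take a Cauchy sequence in $\overline{X}$, transfer the bound $\fwc \leq M$ to $\overline{X}$ via Corollary \ref{cordoppclimitado}, obtain a pseudolimit by Theorem \ref{teosequenciadecauchy}, upgrade to genuine convergence by Theorem \ref{proppseudomaisppclimitado}, and conclude with the closedness of $\overline{X}$. No gaps; if anything, your explicit statement that the $\fwc$-bound is transferred to all of $\overline{X}$ before restricting to the sequence is slightly cleaner than the paper's wording.
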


\begin{proof} Let $\{A_n\}_{n \in \N} \subseteq \overline{X}$ a Cauchy sequence. By theorem \ref{teosequenciadecauchy}, exists $A \in \A$ such that $A_n \pseudo A$. As exists $M > 0$ such that $\fwc(A) \leq M, \ \forall \ A \in \A$, the corolary \ref{cordoppclimitado} implies that $\fwc(A_n) \leq M, \ \forall \ n \in \N$. By theorem \ref{proppseudomaisppclimitado}, $A_n \rightarrow A$. As $\overline{X}$ is closed, $A \in \overline{X}$. Then $\overline{X}$ is complete. \end{proof}

The reciprocal of the previous lemma does not hold. In fact, consider the sequence $A_n = \N - \{2, \ldots, n\}$ and let $X = \{A_n\}_{n \in \N - \{0, 1\}}.$ So $o(A_n) = n = \fwc(A_n)$. Then $X$ is complete (because $d(A_n, A_m) > 1$ if $n \neq m$), but does not exists $M > 0$ such that $\fwc(A) \leq M, \ \forall \ A \in X$.

Also note that we can not conclude that $X$ is complete. In fact, let $A_n = \N - \{2, n\}, \ n \in \N - \{0, 1\}$, and let $X = \{A_n\}_{n \in \N - \{0, 1\}}$. Then $A_n \rightarrow \N - \{2\} \not \in X$. Of course, what happens here is that the additive basis $A$, limit of the sequence $\{A_n\}_{n \in \N - \{0, 1\}}$, can not belong to $X$.

\begin{teo} \label{caracterizacaodoscompactos}

Let $X \subseteq \A$. The following affirmations are equivalent:

\begin{enumerate}
  \item $X$ is compact.
  \item $X$ is closed, limited and exists $M > 0$ such that $\fwc(A) \leq M, \ \forall \ A \in X$.
\end{enumerate}

\end{teo}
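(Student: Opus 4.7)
The plan is to prove the equivalence via the standard characterization of compactness in a metric space: a subset of a metric space is compact if and only if it is complete and totally bounded. Both implications follow almost immediately from results already established in the excerpt, so the work is mostly one of assembling the pieces.

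For the direction $(1) \Rightarrow (2)$, I would first note that a compact subset of any metric space is automatically closed and totally bounded, hence limited. To produce the bound $M$ on $\fwc$, I would invoke the continuity of $\fwc : \A \to \N$ established earlier: the continuous image $\fwc(X)$ of the compact set $X$ is a compact subset of $\N$ (with the metric inherited from $\R$), and any compact subset of $\N$ is bounded. Taking $M$ to be any upper bound of $\fwc(X)$ then gives $\fwc(A) \leq M$ for all $A \in X$.

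For $(2) \Rightarrow (1)$, I would combine Corollary \ref{cortotalmentelimitado} and Lemma \ref{ppclimitadoehcompleto}. Since $X$ is limited, Corollary \ref{cortotalmentelimitado} gives that $X$ is totally bounded. Since $\fwc$ is bounded on $X$ by $M$, Lemma \ref{ppclimitadoehcompleto} asserts that $\overline{X}$ is complete; because $X$ is closed we have $\overline{X} = X$, so $X$ itself is complete. Being complete and totally bounded, $X$ is compact.

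There is no real obstacle: the only point requiring a moment's care is the first direction, where one has to remember that the bound on $\fwc$ does not come directly from compactness of $X$ in $\A$ but from compactness of the image $\fwc(X) \subseteq \N$, which is where continuity of $\fwc$ is used. All other steps are direct citations of the preceding lemmas and the general metric-space fact ``compact $\Leftrightarrow$ complete + totally bounded.''
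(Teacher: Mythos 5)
Your proposal is correct and follows essentially the same route as the paper's own proof: continuity of $\fwc$ gives the bound in the direction $(1) \Rightarrow (2)$, while $(2) \Rightarrow (1)$ combines Corollary \ref{cortotalmentelimitado} and Lemma \ref{ppclimitadoehcompleto} with the fact that compactness is equivalent to completeness plus total boundedness. No gaps to report.
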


\begin{proof} $(\textit{1.} \Rightarrow \textit{2.})$ If $X$ is compact, clearly $X$ must be closed and limited. As the function $\fwc$ is continuous in $\A$, the set $\fwc(X) = \{\fwc(A); \ A \in X\}$ is compact in $\N$. In particular, exists $M > 0$ such that $\fwc(A) \leq M, \ \forall \ A \in X$.

$(\textit{2.} \Rightarrow \textit{1.})$ As $X$ is limited, the corollary \ref{cortotalmentelimitado} implies that $X$ is totally bounded. As exists $M > 0$ such that $\fwc(A) \leq M, \ \forall \ A \in X$, follows from lemma \ref{ppclimitadoehcompleto} that $\overline{X}$ is complete. But then $X = \overline{X} \Rightarrow X$ is complete. So $X$ is compact. \end{proof}

\begin{cor} \label{corolariobolacompacta}

Let $A \in \A, \ A \neq \N$. If $0 < r \leq \difrac{1}{\fwc(A)^2}$, then $\B[A, r]$ is a compact subset of $\A$. In particular, $\A$ is a locally compact metric space.

\end{cor}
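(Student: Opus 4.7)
The plan is to verify the hypotheses of Theorem~\ref{caracterizacaodoscompactos}: the closed ball $\B[A,r]$ is automatically closed and bounded in $\A$, so the substantive task is to produce a uniform constant $M$ with $\fwc(B) \leq M$ for every $B \in \B[A,r]$. Once compactness of such a ball is established, local compactness will follow by taking $r = 1/\fwc(A)^2$ at each $A \neq \N$ and noting that at $\N$ the singleton $\{\N\} = \B[\N, 1/2]$ is already a compact neighbourhood.

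To bound $\fwc$ on the ball I would argue as follows. Given $B \in \B[A,r]$, start from $d(A,B) \leq r \leq 1/\fwc(A)^2 < 1$. Since $|o(A) - o(B)|$ is a non-negative integer bounded by a quantity less than $1$, it must vanish; hence $o(A) = o(B)$ and $\sum_{\alpha \in A \triangle B} 1/\alpha^2 \leq 1/\fwc(A)^2$. Every $\alpha \in A \triangle B$ must therefore satisfy $\alpha \geq \fwc(A)$, so $A$ and $B$ coincide below $\fwc(A)$. Now split on whether $\fwc(A) \in A \triangle B$. In the first case, $A \cap \overleftarrow{\fwc(A)} = B \cap \overleftarrow{\fwc(A)}$, which forces $o_B(\fwc(A)) = o_A(\fwc(A)) = o(B)$ and hence $\fwc(B) \leq \fwc(A)$.

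The delicate case is the second. Here I would first observe that $\fwc(A) \notin A$: otherwise $o_A(\fwc(A)) = 1$, contradicting the fact that $o(A) \geq 2$ (because $A \neq \N$). So $\fwc(A) \in B \setminus A$; but now the single contribution $1/\fwc(A)^2$ already saturates the budget on the symmetric-difference sum, forcing $A \triangle B = \{\fwc(A)\}$. Thus $B$ is uniquely determined as $A \cup \{\fwc(A)\}$, and $\fwc(B)$ is simply a fixed finite number. Setting $M = \max\{\fwc(A), \fwc(A \cup \{\fwc(A)\})\}$ (the second entry being relevant only if $A \cup \{\fwc(A)\}$ actually lies in $\B[A,r]$, i.e.\ when its order equals $o(A)$) yields the required uniform bound, and Theorem~\ref{caracterizacaodoscompactos} delivers compactness. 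I expect this boundary case $d(A,B) = 1/\fwc(A)^2$ to be the main obstacle: overlooking it would leave a gap, since in general $\fwc(A \cup \{\fwc(A)\}) > \fwc(A)$, and the argument succeeds only because the budget constraint pins $B$ down to a single exceptional point.
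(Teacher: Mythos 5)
Your proof is correct, and it follows the same overall strategy as the paper: both arguments verify the hypotheses of Theorem \ref{caracterizacaodoscompactos} by exhibiting a uniform bound for $\fwc$ on $\B[A,r]$, and both settle local compactness at $\N$ by noting $\N$ is isolated. The step where you produce the bound, however, is genuinely different and in fact more careful than the paper's. The paper bounds $\fwc$ only on the open ball $\B\left(A, \frac{1}{\fwc(A)^2}\right)$ (there $d(A,B) < \frac{1}{\fwc(A)^2}$ gives $o(B) = o(A)$ and $B \cap \overleftarrow{\fwc(A)} = A \cap \overleftarrow{\fwc(A)}$, hence $\fwc(B) = \fwc(A)$) and then appeals to Corollary \ref{cordoppclimitado} to pass to the closed ball; this tacitly assumes $\B[A,r] \subseteq \overline{\B(A,r)}$, which is not automatic here: when $o(A \cup \{\fwc(A)\}) = o(A)$, the basis $A \cup \{\fwc(A)\}$ lies at distance exactly $\frac{1}{\fwc(A)^2}$ from $A$ but not in the closure of the open ball, since any basis within $\frac{1}{\fwc(A)^2}$ of it must contain $\fwc(A) \notin A$ and is therefore at distance at least $\frac{1}{\fwc(A)^2}$ from $A$. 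Your direct analysis of the closed ball --- every $\alpha \in A \triangle B$ satisfies $\alpha \geq \fwc(A)$; if $\fwc(A) \notin A \triangle B$ then $\fwc(B) \leq \fwc(A)$, while otherwise the saturation of the sum forces $A \triangle B = \{\fwc(A)\}$, a single extra basis whose $\fwc$ you simply add to $M$ --- handles precisely this boundary, and your first case also absorbs the more exotic boundary points whose symmetric difference with $A$ is an infinite set of elements above $\fwc(A)$ with reciprocal squares summing to exactly $\frac{1}{\fwc(A)^2}$. So what your route buys is a self-contained argument that closes a small gap in the paper's reduction to the open ball, at the modest cost of a short case analysis instead of one citation.
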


\begin{proof} It is sufficient to show that $\B\left[A, \difrac{1}{\fwc(A)^2}\right]$ is a compact subset of $\A$. As $\B\left[A, \difrac{1}{\fwc(A)^2}\right]$ is a bounded and closed subset of $\A$, by the previous theorem it suffices to show that there is $M > 0$ such that $\fwc(A) \leq M, \ \forall \ A \in \B\left[A, \difrac{1}{\fwc(A)^2}\right]$. By corollary \ref{cordoppclimitado}, it is sufficient to show that exists $M > 0$ such that $\fwc(A) \leq M, \ \forall \ A \in \B\left(A, \difrac{1}{\fwc(A)^2}\right)$. For this, let $B \in \B\left(A, \difrac{1}{\fwc(A)^2}\right)$. So $d(A, B) < \difrac{1}{\fwc(A)^2} \leq \difrac{1}{4}$. In particular, $o(A) = o(B)$. Again, as $d(A, B) < \difrac{1}{\fwc(A)^2}$, then $A \cap \overleftarrow{\fwc(A)} = B \cap \overleftarrow{\fwc(A)} \Rightarrow \fwc(A) = \fwc(B)$. \end{proof}

Note that, in the space $(\A, d)$, small balls are compact!

\begin{cor}

Let $\A_h^m = \{A \in \A; \ o(A) = h \text{ e } \fwc(A) = m\}$. Then each $\A_h^m$ is a compact open subset of $\A, \ \A = \dibigcup_{{h \in \N^*} \atop {m \in \N}} \A_h^m$ and the $\A_h^m$ sets are pairwise disjoint.

\end{cor}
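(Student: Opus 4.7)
The plan is to dispatch the four assertions separately, leaning on machinery already in place. Disjointness and the covering property are immediate from the definitions: every $A \in \A$ has a unique order $o(A) \in \N^*$ and a unique first worst case $\fwc(A) \in \N$, so $A$ belongs to $\A_{o(A)}^{\fwc(A)}$ and to no other $\A_h^m$.

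For simultaneous openness and closedness, I would write
\[
\A_h^m \;=\; o^{-1}(\{h\}) \cap \fwc^{-1}(\{m\}).
\]
Both $o$ and $\fwc$ have already been shown to be continuous on $\A$ (the first as a weak contraction, the second in the proposition on continuity of $\fwc$). Since $\N$ inherits the discrete topology, each singleton $\{h\}$ and $\{m\}$ is clopen, so the two preimages above are clopen, and therefore so is their intersection.

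For compactness I would invoke Theorem \ref{caracterizacaodoscompactos}, which demands that $\A_h^m$ be closed, bounded, and admit a uniform bound on $\fwc$. Closedness was just verified. Boundedness follows because $\A_h^m \subseteq \A_h$, and $\A_h$ is totally bounded by Proposition \ref{proposicaodosan} (hence in particular bounded). Finally, by the very definition of $\A_h^m$ we have $\fwc(A) = m$ for every $A \in \A_h^m$, so the constant $M = m$ serves as the required uniform bound.

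There is no real obstacle here: the statement is essentially a bookkeeping consequence of the compactness characterization of Theorem \ref{caracterizacaodoscompactos} combined with the already-proven continuity of $o$ and $\fwc$ and the total boundedness of each $\A_h$.
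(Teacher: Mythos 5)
Your proposal is correct and follows essentially the same route as the paper: clopenness via $\A_h^m = o^{-1}(\{h\}) \cap \fwc^{-1}(\{m\})$ with $o$ and $\fwc$ continuous, compactness via Theorem \ref{caracterizacaodoscompactos}, and the covering/disjointness as immediate bookkeeping. You merely spell out the three hypotheses of the compactness criterion (closedness, boundedness via $\A_h^m \subseteq \A_h$, and the bound $\fwc \equiv m$, where one should take, say, $M = m+1$ to keep $M > 0$), which the paper leaves implicit.
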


\begin{proof} We know that the functions $o: \ \A \rightarrow \N^*$ and $\fwc: \ \A \rightarrow \N$ are continuous. As $\A_h^m = o^{-1}(\{h\}) \cap \fwc^{-1}(\{m\})$, follows that each $\A_h^m$ is open in $\A$. The compactness follows directly from theorem \ref{caracterizacaodoscompactos} and the other affirmations are obvious. \end{proof}

Note that some open balls can be compact, requiring for it, just a small condition. Let $A \in \A, \ A \neq \N$. If $0 < r \leq \difrac{1}{\fwc(A)^2}$ and $\mathcal{S}(A, r) = \emptyset$, then $\B(A, r)$ is a compact subset of $\A$. To see this, just remember that $\B[A, r] = \B(A, r) \cup \mathcal{S}(A, r)$.

\begin{defi} \index{conjunto magro}

Let $(T, \tau)$ a topological space. We say that $X \subseteq T$ is meagre in $T$ if $X = \dibigcup_{n \in \N} X_n$ is such that $\inte(\overline{X_n}) = \emptyset, \ \forall \ n \in \N$.

\end{defi}

\begin{defi} \index{espaço topológico!topologicamente completo}

A topological space $(T, \tau)$ is topologically complete is it is homeomorphic to a complete metric space.

\end{defi}

As $\A$ is locally compact, $\A$ is topologically complete. So the Baire theorem holds in $\A$:

\begin{teo} [Baire] \index{teorema!de Baire}

Every meagre set in $\A$ has empty interior. Equivalently: if $X = \dibigcup_{n \in \N} X_n$, where each $X_n$ is closed in $\A$ and has empty interior, then $\inte(X) = \emptyset$. Yet: every countable intersection os open dense subsets is a dense subset of $\A$.

\end{teo}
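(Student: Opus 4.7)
The plan is to prove the third equivalent formulation: every countable intersection of open dense subsets of $\A$ is dense in $\A$. The decisive input is the local compactness established in Corollary \ref{corolariobolacompacta}, which guarantees that for every $A \in \A$ with $A \neq \N$ and every $0 < r \leq 1/\fwc(A)^2$, the closed ball $\B[A, r]$ is compact; and for $A = \N$ we can use the singleton $\{\N\}$, which is already compact since $\N$ is isolated. I would then run the classical Baire argument for locally compact Hausdorff spaces inside $\A$.

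Concretely, let $\{U_n\}_{n \in \N}$ be open dense subsets and let $V \subseteq \A$ be any nonempty open set; I want to exhibit a point in $V \cap \dibigcap_{n \in \N} U_n$. Since $U_0$ is dense, $V \cap U_0 \neq \emptyset$ and is open; pick $A_0$ in it and, using local compactness, choose a radius $r_0 > 0$ small enough that $K_0 = \B[A_0, r_0] \subseteq V \cap U_0$ and $K_0$ is compact. Inductively, given a nonempty compact $K_n$ with $K_n \subseteq V \cap U_0 \cap \cdots \cap U_n$ and $\inte(K_n) \neq \emptyset$, the density of $U_{n+1}$ gives that $\inte(K_n) \cap U_{n+1}$ is a nonempty open set, in which I pick $A_{n+1}$ and a compact closed ball $K_{n+1} = \B[A_{n+1}, r_{n+1}]$ sitting inside $\inte(K_n) \cap U_{n+1}$. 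This produces a decreasing chain $K_0 \supseteq K_1 \supseteq \cdots$ of nonempty compact subsets of $\A$.

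By the finite intersection property for compact sets in the Hausdorff space $\A$, one concludes $\dibigcap_{n \in \N} K_n \neq \emptyset$, and any element of this intersection belongs to $V \cap \dibigcap_{n \in \N} U_n$, proving density. The equivalence of the three statements in the theorem is a purely formal exercise (complementation and the identity $\inte(X)^c = \overline{X^c}$), so the proof of any one of them suffices. The main obstacle is the bookkeeping required to be sure that the radii $r_{n+1}$ can simultaneously be chosen (i) so small that $\B[A_{n+1}, r_{n+1}] \subseteq \inte(K_n) \cap U_{n+1}$, and (ii) no larger than $1/\fwc(A_{n+1})^2$ so that Corollary \ref{corolariobolacompacta} forces compactness; since $\inte(K_n) \cap U_{n+1}$ is open in the metric space $\A$, arbitrarily small balls around $A_{n+1}$ lie inside it, so both conditions are compatible and the inductive step goes through.
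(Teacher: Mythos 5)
Your argument is correct, and it proves the theorem by a genuinely different route than the paper. The paper gives no direct proof: it observes that $\A$ is locally compact, invokes the (nontrivial, Engelking-style) fact that a locally compact metrizable space is topologically complete, i.e.\ homeomorphic to a complete metric space, and then applies the Baire category theorem for complete metric spaces, using that being a Baire space is a topological invariant. You instead rerun the classical locally-compact-Hausdorff Baire argument inside $\A$ itself: nested compact closed balls $K_0 \supseteq K_1 \supseteq \cdots$ with $K_{n+1} \subseteq \inte(K_n) \cap U_{n+1}$, nonempty intersection by the finite intersection property. Both proofs ultimately rest on the same input, Corollary \ref{corolariobolacompacta} (closed balls of radius at most $\difrac{1}{\fwc(A)^2}$ around $A \neq \N$ are compact), but yours is self-contained and avoids the completeness-transfer theorem, at the cost of the radius bookkeeping you describe --- which does go through, since inside any open set one can shrink the closed ball radius below both the distance to the complement and $\difrac{1}{\fwc(A_{n+1})^2}$. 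One small simplification available to you: since $\{\N\} = \B(\N,1)$ is open, every dense subset of $\A$ contains $\N$, so in the case $A_0 = \N$ you can conclude $\N \in V \cap \dibigcap_{n \in \N} U_n$ immediately rather than carrying $K_n = \{\N\}$ through the induction; and the formal equivalence of the three formulations is, as you say, routine.
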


The following proposition terminating the study of the basic properties of the metric space $(\A, d)$:

\begin{prop}

The set $\{A \in \A; \ A^C \textit{ é finite}\}$ is countable and dense in $\A$. In particular, $\A$ is a separable metric space.

\end{prop}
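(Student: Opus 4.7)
The plan is to handle countability and density separately; separability then follows automatically, since a metric space with a countable dense subset is separable by definition.

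For countability, I would observe that the map $A \mapsto A^c$ sends $\{A \in \A; \ A^c \text{ is finite}\}$ injectively into the collection of finite subsets of $\N_0$. The latter is a countable union of finite sets (one per possible cardinality), hence countable, so the set in question is countable.

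For density, the plan is, given $A \in \A$ and $\varepsilon > 0$, to produce a cofinite $B \in \A$ with $d(A, B) < \varepsilon$ by simply completing $A$ from some index onward. If $A = \N$ there is nothing to do, since $\N^c = \emptyset$. Otherwise $\fwc(A)$ is defined and at least $2$; I would set $n_0 = \fwc(A)$, choose $N \geq n_0$ large enough that $\disum_{n > N} \difrac{1}{n^2} < \varepsilon$, and put $B = A \cup \{n \in \N_0; \ n > N\}$. Then $B \supseteq A$ is an additive basis with $o(B) \leq o(A)$, and $B^c \subseteq \{0, 1, \ldots, N\}$ is finite.

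The crux—and essentially the only step that needs any argument—is to check that $o(B) = o(A)$, so that the order-discrepancy term in $d(A, B)$ vanishes. The key point is that every element of $B \setminus A$ strictly exceeds $n_0$ and is positive, so none of these new elements can appear as a summand in any representation of $n_0$ as a sum of elements of $B$. Hence $o_B(n_0) = o_A(n_0) = o(A)$, giving $o(B) \geq o(A)$ and therefore equality. Once this is established, $d(A, B) = \soma{A}{B} \leq \disum_{n > N} \difrac{1}{n^2} < \varepsilon$, completing the density argument. Separability of $(\A, d)$ then follows, and the whole argument reduces to the one-line verification about $\fwc(A)$.
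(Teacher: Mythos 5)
Your proposal is correct and follows essentially the same route as the paper: complete $A$ from an index beyond $\max\{\fwc(A), \text{tail cutoff}\}$ onward, note the new elements exceed $\fwc(A)$ so the order is unchanged, and bound $d(A,B)$ by the tail $\disum_{\alpha > N} \difrac{1}{\alpha^2} < \varepsilon$. You in fact spell out the justification that the paper leaves as the parenthetical ``because $x > \fwc(A)$'', namely that no representation of $\fwc(A)$ can use the added large elements, so $o_B(\fwc(A)) = o_A(\fwc(A)) = o(A)$.
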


\begin{proof} Let $X = \{A \in \A; \ A^C \textit{ é finito}\}$. Of course, $X$ is countable. We will show that $\overline{X} = \A$. For this, let $A \in \A$ e $\varepsilon > 0$, and let $n_0 \in \N$ such that $\disum_{\alpha > n_0} \difrac{1}{\alpha^2} < \varepsilon$. Let $x = \max \{n_0, \fwc(A)\} + 1$. If $B = A \cup \{x, x + 1, x + 2, \ldots\}$, then $B \in X, \ o(B) = o(A)$ (because $x > \fwc(A)$), and $d(A, B) = \soma{A}{B} \leq \disum_{\alpha \geq x}^{\infty} \difrac{1}{\alpha^2} \leq \disum_{\alpha > n_0}^{\infty} \difrac{1}{\alpha^2} < \varepsilon$. So $\overline{X} = \A$. \end{proof}

\section{Functions from $\A$ to $\A$}

Add elements in an additive basis is a difficult process to understand. In fact, we can add a single number in a given base additive to decrease its order (put the number $2$ in $\N - \{2\}$), or we can add an infinite set of numbers in an additive basis and not change their order (put \{5, 7, 9, 11, 13, 15, \ldots\} in $\{0, 1, 2, 4, 6, 8, 10, \ldots\}$).

In this section we study in more depth way some functions $ f: \A \rightarrow \A$ which have certain properties. At the end of this section, we present a theorem that allows us (not in a direct way) see if we can add certain set of numbers in a base without changing their order.

If $f: \A \rightarrow \A$ is a function, let's denote $\mathcal{C}(f) = \{A \in \A; \ f \textit{ is continuous in } \A\}$.

\begin{defi} \index{função!ordenativa}

A function $f: \A \rightarrow \A$ is called ordenative in $A \in \A$ is exists $r > 0$ such that if $B \in \B(A, r)$, then $o(f(A)) = o(f(B))$. A function $f: \A \rightarrow \A$ is called ordenative in $X \subseteq \A$ if $f$ is ordenative in $A, \ \forall \ A \in X$.

\end{defi}

Another type of function to be studied are the pseudocontinuous functions.

\begin{defi} \index{função!pseudocontínua}

A function $f: \A \rightarrow \A$ is pseudocontinuous in $A \in \A$ if $A_n \rightarrow A$ implies $f(A_n) \pseudo f(A)$. A function $f: \A \rightarrow \A$ is pseudocontinuous in $X \subseteq \A$ if $f$ is pseudocontinuous in $A, \ \forall \ A \in X$.

\end{defi}

\begin{exem} If $\Gamma \subseteq \N$ and $\begin{array}[t]{cccc} f_\Gamma: & \A & \rightarrow & \A \\ & A & \mapsto & A \cup \Gamma \end{array}$, then $f_\Gamma$ is a pseudocontinuous function in $\A$.

\end{exem}

\begin{exem} Se $\{1\} \subseteq \Gamma \subseteq \N^*$ e $\begin{array}[t]{cccc} g_\Gamma: & \A & \rightarrow & \A \\ & A & \mapsto & \dibigcup_{i \in \Gamma} A^i \end{array}$, então $g_\Gamma$ é uma função pseudocontínua em $\A$.

\end{exem}

\begin{teo} \label{continuasseordenativapseudo}

let $A \in \A$. The following statements are equivalent with respect to a function $f: \A \rightarrow \A$:

\begin{enumerate}
  \item $f$ is continuous in $A$.
  \item $f$ is ordenative and pseudocontinuous in $A$.
\end{enumerate}

\end{teo}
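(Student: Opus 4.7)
My plan is to prove the equivalence by handling each direction separately, using the fact that the metric $d$ decomposes as $d(X,Y) = |o(X) - o(Y)| + \soma{X}{Y}$, so that convergence is equivalent to the conjunction of ``eventually equal order'' plus ``pseudoconvergence''. The ordenative hypothesis controls the first summand and the pseudocontinuous hypothesis controls the second.

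For the implication $\textit{1.} \Rightarrow \textit{2.}$, assume $f$ is continuous at $A$. For the ordenative property, apply continuity with $\varepsilon = \difrac{1}{2}$ to obtain $\delta > 0$ such that $d(A,B) < \delta$ forces $d(f(A), f(B)) < \difrac{1}{2}$; since $|o(f(A)) - o(f(B))| \leq d(f(A), f(B)) < \difrac{1}{2}$ and orders are integers, this yields $o(f(A)) = o(f(B))$, so $r = \delta$ works. For pseudocontinuity, take any $A_n \to A$; by continuity $f(A_n) \to f(A)$, and by the proposition already proved that convergence implies pseudoconvergence, we get $f(A_n) \pseudo f(A)$.

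For $\textit{2.} \Rightarrow \textit{1.}$, I would work with sequences. Let $A_n \to A$ in $\A$. Because $f$ is ordenative at $A$, fix $r > 0$ such that $B \in \B(A, r)$ implies $o(f(B)) = o(f(A))$; since $A_n \to A$, there is $n_1$ with $A_n \in \B(A, r)$ for all $n \geq n_1$, hence $|o(f(A_n)) - o(f(A))| = 0$ for all such $n$. Because $f$ is pseudocontinuous at $A$, we also have $f(A_n) \pseudo f(A)$, so given $\varepsilon > 0$ there is $n_2$ with $\soma{f(A_n)}{f(A)} < \varepsilon$ for all $n \geq n_2$. Setting $n_0 = \max\{n_1, n_2\}$,
\[
d(f(A_n), f(A)) = |o(f(A_n)) - o(f(A))| + \soma{f(A_n)}{f(A)} < \varepsilon
\]
for $n \geq n_0$, so $f(A_n) \to f(A)$, and continuity of $f$ at $A$ follows.

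The argument is essentially bookkeeping against the two summands of $d$, so no single step is really hard; the subtlest point is simply recognizing that the ordenative condition is exactly what is needed to annihilate the integer part of the distance in the reverse direction, which is why plain pseudocontinuity does not suffice on its own.
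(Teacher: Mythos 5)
Your proposal is correct and follows essentially the same route as the paper's own proof: for $\textit{1.} \Rightarrow \textit{2.}$ you use $\varepsilon = \difrac{1}{2}$ and the integrality of the order to get the ordenative property, plus ``convergence implies pseudoconvergence'' for pseudocontinuity, and for $\textit{2.} \Rightarrow \textit{1.}$ you argue sequentially, letting the ordenative hypothesis kill the $|o(f(A_n)) - o(f(A))|$ term and pseudocontinuity control the series term, exactly as in the paper. No gaps; the bookkeeping against the two summands of $d$ is precisely the intended argument.
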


\begin{proof} $(\textit{1.} \Rightarrow \textit{2.})$ Suppose $f$ is continuous in $A$. So given $\varepsilon = \difrac{1}{2}$ exists $\delta > 0$ such that if $d(A, B) < \delta$, then $d(f(A), f(B)) < \difrac{1}{2}$. In particular, if $d(A, B) < \delta$, then $o(f(A)) = o(f(B))$, in other words, $f$ is ordenative in $A$. To see that $f$ is pseudocontinuous in $A$, let $\{A_n\}_{n \in \N} \subseteq A$ such that $A_n \rightarrow A$. As $f$ is continuous in $A$, $f(A_n) \rightarrow f(A) \Rightarrow f(A_n) \pseudo f(A)$.

$(\textit{2.} \Rightarrow \textit{1.})$ Let $\{A_n\}_{n \in \N} \subseteq A$ such that $A_n \rightarrow A$. We will show that $f(A_n) \rightarrow f(A)$. For this, let $\varepsilon > 0$. As $f$ is pseudocontinuous in $A, \ f(A_n) \pseudo f(A)$, then exists $n_0 \in \N$ such that if $n \geq n_0$, then $\soma{f(A_n)}{f(A)} < \varepsilon$. Now, as $f$ is ordenative in $A$, exists $r > 0$ such that if $d(A, B) < r$, then $o(f(A)) = o(f(B))$. As $A_n \rightarrow A$, exists $n_1 \in \N$ such that if $n \geq n_1$, then $d(A, A_n) < r \Rightarrow o(f(A_n)) = o(f(A))$. Let $n_2 = \max\{n_0, n_1\}$. So, if $n \geq n_2$, then $$d(f(A_n), f(A)) = \dd{f(A_n)}{f(A)} = \soma{f(A_n)}{f(A)} < \varepsilon.$$

Therefore $f(A_n) \rightarrow f(A)$. \end{proof}

\begin{exem}

The function $\begin{array}[t]{cccc} f: & \A & \rightarrow & \A \\ & A & \mapsto & \left\{ \begin{array}{cl} A & \text{, se } A \neq \N - \{2\} \\ \N - \{3\} & \text{, se } A = \N - \{2\} \end{array}\right.\end{array}$ is ordenative in $\A$ (because $o(A) = o(f(A)), \ \forall \ A \in \A$), but is not continuous in $\N - \{2\}$.

\end{exem}

\begin{exem} 

The function $\begin{array}[t]{cccc} f: & \A & \rightarrow & \A \\ & A & \mapsto & \left\{ \begin{array}{cl} A & \text{, se } 2 \in A \\ A \cup \{2\} & \text{, se } 2 \not \in A \end{array}\right.\end{array}$ is pseudocontinuous in $\A$ but is not continuous in $\N - \{2\}$.

\end{exem}

\begin{prop}

If $f: \A \rightarrow \A$ is a pseudocontinuous function in $\A$, then

\begin{enumerate}
  \item $\mathcal{C}(f)$ is an open subset of $\A$.
  \item if $K$ is a compact subset of $\A$, then $f(K)$ is closed in $\A$.
\end{enumerate}

\end{prop}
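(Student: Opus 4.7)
The plan is to treat the two parts separately, relying on Theorem \ref{continuasseordenativapseudo} for (1) and on sequential compactness plus the uniqueness of pseudoconvergence limits for (2).

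For part (1), I would start with an arbitrary $A \in \mathcal{C}(f)$. By Theorem \ref{continuasseordenativapseudo}, there exists $r > 0$ such that $o(f(B)) = o(f(A))$ for every $B \in \B(A, r)$. The key observation is that ordenativeness propagates to a full neighborhood: for any $B \in \B(A, r)$, setting $s = r - d(A, B) > 0$ yields $\B(B, s) \subseteq \B(A, r)$ by the triangle inequality, so $o(f(C)) = o(f(A)) = o(f(B))$ for every $C \in \B(B, s)$, meaning $f$ is ordenative at $B$. Since $f$ is pseudocontinuous on all of $\A$ by hypothesis, it is in particular pseudocontinuous at $B$, and Theorem \ref{continuasseordenativapseudo} again yields $B \in \mathcal{C}(f)$. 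Hence $\B(A, r) \subseteq \mathcal{C}(f)$, proving that $\mathcal{C}(f)$ is open.

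For part (2), let $K \subseteq \A$ be compact and suppose $\{B_n\}_{n \in \N} \subseteq f(K)$ converges to $B \in \A$; write $B_n = f(A_n)$ with $A_n \in K$. Because $\A$ is a metric space, $K$ is sequentially compact, so some subsequence $\{A_{n_k}\}$ converges to a point $A \in K$. Pseudocontinuity of $f$ at $A$ then yields $B_{n_k} = f(A_{n_k}) \pseudo f(A)$. On the other hand $B_{n_k} \rightarrow B$ as a subsequence of the convergent sequence $\{B_n\}$, hence $B_{n_k} \pseudo B$ by the earlier proposition stating that convergence implies pseudoconvergence. The uniqueness of pseudoconvergence limits, noted immediately after Lemma \ref{dibigcapdibigcup}, forces $B = f(A) \in f(K)$, so $f(K)$ is closed.

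No step presents a serious obstacle, but the subtlety worth flagging is the choice to route part (2) through pseudoconvergence: pseudocontinuity on its own does not upgrade to genuine continuity, so one cannot directly conclude $f(A_{n_k}) \rightarrow f(A)$; instead, the argument exploits the fact that $\{B_{n_k}\}$ is simultaneously convergent and pseudoconvergent, together with the uniqueness of the pseudo-limit.
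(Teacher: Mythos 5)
Your proposal is correct and follows essentially the same route as the paper: part (1) propagates ordenativeness from $A$ to every point of a ball around $A$ and then applies Theorem \ref{continuasseordenativapseudo}, and part (2) extracts a convergent subsequence in $K$, uses pseudocontinuity together with the fact that convergence implies pseudoconvergence, and concludes by uniqueness of the pseudo-limit. The only cosmetic difference is that you keep the subsequence notation explicit where the paper passes to it without loss of generality.
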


\begin{proof} Let $A \in \mathcal{C}(f)$. Given $\varepsilon = \difrac{1}{2}$, exists $\delta > 0$ such that if $d(A, B) < \delta$, then $d(f(A), f(B)) < \difrac{1}{2}$. In particular, $o(f(A)) = o(f(B)), \ \forall \ B \in \B(A, \delta)$. I claim that $\B(A, \delta) \subseteq \mathcal{C}(f)$.

in fact, let $B \in \B(A, \delta)$. As $f$ is pseudocontinuous in $B$, by theorem \ref{continuasseordenativapseudo} it is sufficient to show that $f$ is ordenative in $B$. For this, let $r > 0$ such that $\B(B, r) \subseteq \B(A, \delta)$. If $C \in \B(B, r)$, then $C \in \B(A, \delta) \Rightarrow o(f(C)) = o(f(A)) = o(f(B))$. As $C$ is arbitrarily, $f$ is ordenative in $B$ and, therefore, continuous in $B$.

Now, suppose $K$ is a compact and let $\{f(A_n)\}_{n \in \N}$ be a sequence in $f(K)$ such that $f(A_n) \rightarrow B \in \A$. We need to show that $B \in f(K)$. As $K$ is a compact, the sequence $\{A_n\}_{n \in \N}$ admits a convergent subsequence for, let's say, $B' \in K$. Suppose without loss of generality that the sequence itself converges to $B'$. As $A_n \rightarrow B'$ and $f$ é pseudocontinuous, then $f(A_n) \pseudo f(B')$. As $f(A_n) \rightarrow B$, then $f(A_n) \pseudo B$. By uniqueness of the limit, $B = f(B') \Rightarrow B \in f(K)$. \end{proof}

\begin{defi}

A function $f: \A \rightarrow \A$ is called expansive if $f(A) \supseteq A, \ \forall \ A \in \A$.

\end{defi}

\begin{teo}

Let $f: \A \rightarrow \A$ an expansive function. If $f$ is  pseudocontinuous in $A \in \A$ and $o(A) = o(f(A))$, then $f$ is continuous in $A$.

\end{teo}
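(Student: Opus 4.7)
The plan is to combine Theorem \ref{continuasseordenativapseudo} with Lemma \ref{pseudoconvergeordemmaiorigual} and the expansiveness hypothesis. Since $f$ is already assumed pseudocontinuous at $A$, by Theorem \ref{continuasseordenativapseudo} it suffices to show that $f$ is also ordenative at $A$. So the whole task reduces to producing some $r>0$ such that $o(f(B))=o(f(A))$ for every $B\in\B(A,r)$.

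I would argue by contradiction. Suppose $f$ is not ordenative at $A$. Then for every $n\in\N$ there exists $A_n\in\B(A,1/n)$ with $o(f(A_n))\neq o(f(A))$, so $A_n\to A$ in the metric. Pseudocontinuity of $f$ at $A$ gives $f(A_n)\pseudo f(A)$, so Lemma \ref{pseudoconvergeordemmaiorigual} yields an index beyond which $o(f(A_n))\geq o(f(A))$; combined with the strict inequality this forces $o(f(A_n))>o(f(A))$ for all sufficiently large $n$.

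On the other hand, since $A_n\to A$ and the order function is (weakly) continuous, one can also arrange that $o(A_n)=o(A)$ for $n$ large (take $d(A_n,A)<1/2$). Because $f$ is expansive, $f(A_n)\supseteq A_n$, and an additive basis containing another only decreases (or preserves) the order, so $o(f(A_n))\leq o(A_n)=o(A)=o(f(A))$. This directly contradicts $o(f(A_n))>o(f(A))$, establishing that $f$ is ordenative at $A$ and thus, by the equivalence, continuous at $A$.

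The proof is short and all three hypotheses are used in distinct essential ways: pseudocontinuity feeds Lemma \ref{pseudoconvergeordemmaiorigual} to get a lower bound on the order of $f(A_n)$; expansiveness supplies the matching upper bound $o(f(A_n))\leq o(A_n)$; and the equality $o(A)=o(f(A))$ is precisely what makes these two bounds collide. The only mildly delicate step is the monotonicity fact $B\supseteq C\Rightarrow o(B)\leq o(C)$ for bases, which is immediate from the definition of the order of a basis, and the preliminary reduction to showing ordenativeness, which is exactly the content of Theorem \ref{continuasseordenativapseudo}; no obstacle beyond that is expected.
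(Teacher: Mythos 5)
Your argument is correct, and its mathematical core coincides with the paper's: the collision of the lower bound $o(f(A_n)) \geq o(f(A))$ (coming from $f(A_n) \pseudo f(A)$ via a first-worst-case argument) with the upper bound $o(f(A_n)) \leq o(A_n) = o(A) = o(f(A))$ (coming from expansiveness and $d(A_n,A) < \tfrac{1}{2}$). The packaging, however, differs. The paper argues directly: it fixes a sequence $A_n \rightarrow A$, assumes $A \neq \N$ to guarantee $\fwc(f(A)) > 0$, and reproves inline the content of Lemma \ref{pseudoconvergeordemmaiorigual} (choosing $\varepsilon = \tfrac{1}{\fwc(f(A))^2}$ to force $f(A_n) \cap \overleftarrow{\fwc(f(A))} = f(A) \cap \overleftarrow{\fwc(f(A))}$), then concludes $d(f(A_n),f(A)) = \soma{f(A_n)}{f(A)} < \varepsilon$. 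You instead reduce the whole statement, via Theorem \ref{continuasseordenativapseudo}, to showing that $f$ is ordenative at $A$, and establish that by contradiction, quoting Lemma \ref{pseudoconvergeordemmaiorigual} as a black box. Your route buys economy: no case split on $A = \N$, no duplication of the lemma's $\fwc$ computation, and the three hypotheses are visibly used in separate steps; the paper's route buys a self-contained, quantitative proof that exhibits the convergence $f(A_n) \rightarrow f(A)$ explicitly rather than through the ordenative/pseudocontinuous characterization. The only step you should make explicit is the monotonicity $B \supseteq C \Rightarrow o(B) \leq o(C)$ for $C \in \A$, which you correctly flag and which is exactly the inequality the paper also invokes, so there is no gap.
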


\begin{proof} Let $A_n \rightarrow A$ and $\varepsilon > 0$. Suppose $A \neq \N$ to avoid the trivial case.

as $A_n \rightarrow A$, then for the number $\difrac{1}{2} > 0$ exists $n_0 \in \N$ such that if $n \geq n_0$, then $d(A, A_n) < \difrac{1}{2} \Rightarrow o(A) = o(A_n), \ \forall \ n \geq n_0$. As $A_n \subseteq f(A_n), \ \forall \ n \in \N$, then $o(f(A_n)) \leq o(A_n) = o(A) = o(f(A)), \ \forall \ n \geq n_0$.

Now, as $f$ is pseudocontinuous in $A$, then $f(A_n) \pseudo f(A)$. Note that $A \neq \N$ and $o(A) = o(f(A)) \Rightarrow f(A) \neq \N \Rightarrow \fwc(f(A)) > 0$. So, for the number $\difrac{1}{\fwc(f(A))^2} > 0$ exists $n_1 \in \N$ such that if $n \geq n_1$, then $\soma{f(A_n)}{f(A)} < \difrac{1}{\fwc(f(A))^2}.$

So $f(A_n) \cap \overleftarrow{\fwc(f(A))} = f(A) \cap \overleftarrow{\fwc(f(A))}, \ \forall \ n \geq n_1$. Therefore $o(f(A_n)) \geq o_{f(A_n)}(\fwc(f(A))) = o_{f(A)}(\fwc(f(A))) = o(f(A)), \ \forall \ n \geq n_1$.

Moreover, for that $\varepsilon > 0$, exists $n_2 \in \N$ such that if $n \geq n_2$, then $\soma{f(A_n)}{f(A)} < \varepsilon$.

So, let $n_3 = \max\{n_0, n_1, n_2\}$. Thereby, if $n \geq n_3$, then $d(f(A_n), f(A)) = |o(f(A_n)) - o(f(A))| + \soma{f(A_n)}{f(A)} = \soma{f(A_n)}{f(A)} < \varepsilon \Rightarrow f(A_n) \rightarrow f(A) \Rightarrow f$ is continuous in $A$. \end{proof}

\begin{teo} [More general]

Let $f: \A \rightarrow \A$ and suppose that $f$ is continuous in $A \in \A$. Suppose that exists $r > 0$ such that exists $D \subseteq \B(A, r)$ dense in $\B(A, r)$ such that $o(B) = o(f(B)), \ \forall \ B \in D$. Then $o(A) = o(f(A))$.

\end{teo}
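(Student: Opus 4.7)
The plan is to exploit the density of $D$ in $\B(A,r)$ to approach $A$ by a sequence on which the hypothesis applies, and then to combine continuity of $f$ at $A$ with the fact that the order function is integer-valued and continuous, hence locally constant on $\A$.

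First, since $A \in \B(A,r)$ and $D$ is dense in $\B(A,r)$, I would choose a sequence $\{B_n\}_{n \in \N} \subseteq D$ with $B_n \to A$ in $\A$. Applying continuity of $f$ at $A$, I then obtain $f(B_n) \to f(A)$.

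The key ingredient is the observation (already used implicitly several times in the paper) that convergence in $\A$ eventually fixes the order: if $C_n \to C$ in $\A$, then eventually $d(C_n,C) < \difrac{1}{2}$, and since $o$ is a weak contraction taking integer values, this forces $o(C_n) = o(C)$ for all sufficiently large $n$. Applying this to both $B_n \to A$ and $f(B_n) \to f(A)$ would yield some $n_0 \in \N$ such that, for every $n \geq n_0$,
\[
o(B_n) = o(A) \qquad \text{and} \qquad o(f(B_n)) = o(f(A)).
\]

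Finally, since each $B_n$ lies in $D$, the hypothesis gives $o(B_n) = o(f(B_n))$ for every $n$, and combining this with the two equalities above for any fixed $n \geq n_0$ yields $o(A) = o(f(A))$. I do not expect any real obstacle: the argument is a standard density/continuity transfer, and the only conceptual point is that the discreteness of $\N^*$ lets the equality on $D$ survive the passage to the limit.
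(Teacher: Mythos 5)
Your argument is correct and is essentially the paper's proof in sequential form: the paper picks a single $B \in D$ inside the $\delta$-ball furnished by continuity (with $\delta < \min\{r,1\}$) and concludes $o(A) = o(B) = o(f(B)) = o(f(A))$, exactly the density/continuity/integer-valuedness transfer you describe.
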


\begin{proof} As $f$ is continuous in $A$, given $\varepsilon = \difrac{1}{2}$, exists $\delta > 0$ such that if $d(A, B) < \delta$, then $d(f(A), f(B)) < \varepsilon \Rightarrow o(f(A)) = o(f(B))$. Suppose without loss of generality that $\delta < \min\{r, 1\}$. As $D$ is dense in $\B(A, r)$, exists $B \in D \cap \B(A, \delta)$. So $o(A) = o(B) = o(f(B)) = o(f(A))$. \end{proof}

\section{Thin bases}

\begin{lema}

Let $A \in \A, \ h \in \N^*$ and $c > 0$ satisfying $A(n) \geq c\sqrt[h]{n}, \ \forall \ n \in \N^*$. Then $o(A) \geq h$.

\end{lema}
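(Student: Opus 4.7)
The natural plan is to argue by contradiction. Assume $o(A) = k$ with $1 \leq k \leq h - 1$; then every $m \in \overleftarrow{n}$ can be written as a sum of at most $k$ elements of $A \cap \overleftarrow{n}$, so the $n+1$ integers in $\overleftarrow{n}$ are all produced by multisets of size at most $k$ drawn from the $A(n)$-element set $A \cap \overleftarrow{n}$. The number of such multisets is bounded by $\binom{A(n)+k}{k}$, which gives the combinatorial estimate
$$n + 1 \;\leq\; \binom{A(n)+k}{k} \;\leq\; \frac{(A(n)+k)^k}{k!}.$$
This forces $A(n)$ to grow at least like $n^{1/k}$ and is the one robust tool available.

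The plan is then to pit this combinatorial bound against the density hypothesis $A(n) \geq c\sqrt[h]{n}$ by choosing $n$ so large that the two constraints become incompatible, thereby ruling out $k \leq h - 1$ and forcing $o(A) \geq h$. In the classical thin-basis setting the two estimates collide immediately because one controls $A(n)$ from above while the counting estimate controls it from below; here the aim would be to set up an analogous collision, tracking the constants carefully so that the comparison point $n$ can be selected explicitly in terms of $c$, $h$, and $k$.

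The main obstacle I foresee is closing exactly this comparison. Both the hypothesis $A(n) \geq c\,n^{1/h}$ and the combinatorial estimate $A(n) \gtrsim n^{1/k}$ give lower bounds on $A(n)$, and since $1/k > 1/h$ the combinatorial estimate is already the stronger one, so the two inequalities as stated reinforce rather than contradict each other. Extracting a genuine contradiction against $k \leq h-1$ from the stated hypothesis therefore requires an extra ingredient beyond the multiset count: either a refinement of the counting step exploiting that the $n+1$ target integers must appear as \emph{distinct} represented values (not merely be covered by a set of multisets), or an additional consequence of $A \in \A$ that converts the stated lower bound on $A(n)$ into a usable constraint. Pinpointing this extra ingredient is, in my view, the heart of the proof, and it is where I would concentrate effort after laying down the counting setup above.
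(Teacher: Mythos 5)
You have correctly identified the one robust tool here --- the representation count $n \leq \binom{A(n)+k}{k} \leq (1+A(n))^{k}$ for a basis of order $k$ --- and you have also correctly diagnosed why the argument refuses to close: the hypothesis as printed, $A(n) \geq c\sqrt[h]{n}$, is a lower bound on $A(n)$ pointing in the same direction as the counting estimate. The missing ingredient you are looking for does not exist, because the statement as literally written is false: $A = \N$ satisfies $A(n) = n \geq c\,n^{1/h}$ for every $h$ with $c = 1$, yet $o(\N) = 1$. The inequality in the hypothesis is a typo for $A(n) \leq c\sqrt[h]{n}$; this is consistent with the definition of thin basis given immediately afterwards ($A(n) \leq c\,n^{1/o(A)}$) and with every subsequent use of the lemma, where it certifies that a sparse set cannot have small order. (The paper's own proof contains the matching typo ``suppose by absurd that $o(A) \geq h-1$'' for $o(A) \leq h-1$.)

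With the corrected hypothesis your setup closes immediately and coincides with the paper's proof: assuming $o(A) \leq h-1$ one has $(h-1)A = \N$, so for every $n$
\[
n = \bigl((h-1)A\bigr)(n) \leq \bigl[1+A(n)\bigr]^{h-1} \leq \bigl[2A(n)\bigr]^{h-1} \leq \bigl(2c\,n^{1/h}\bigr)^{h-1} = (2c)^{h-1} n^{\frac{h-1}{h}},
\]
and dividing by $n$ gives $1 \leq (2c)^{h-1} n^{-1/h} \to 0$, a contradiction for $n$ large. So the collision you were trying to engineer is exactly the upper bound from the hypothesis against the lower bound from the multiset count; your refinements about distinct represented values are not needed. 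The honest conclusion is that your proposal is the right proof of the intended lemma, stalled only because you took the misprinted inequality at face value rather than flagging it as an error.
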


\begin{proof} The result is trivial if $h = 1$. Suppose $h \geq 2$ and suppose by absurd that $o(A) \geq h - 1$. Then $(h - 1) \cdot A = \N$. If $n \in \N^*$, then $$n = ((h - 1)A)(n) \leq [1 + A(n)]^{h - 1} \leq [2A(n)]^{h - 1} \leq (2c\sqrt[h]{n})^{h - 1} = (2c)^{h - 1}n^{\frac{h - 1}{h}}.$$ Dividing by $n$, we obtain $$1 \leq (2c)^{h - 1}n^{-\frac{1}{h}} \rightarrow 0,$$ absurd. \end{proof}

\begin{defi}

A basis $A \in \A$ is called thin if exists $c > 0$ such that $$A(n) \leq cn^{\frac{1}{o(A)}}, \ \forall \ n \in \N^*.$$

\end{defi}

Clearly $\N$ is a thin basis. Nathanson, in his article Cassels Bases, show the construction independently realized by Raikov and Stöhr of thin bases of order $h, \ \forall \ h \geq 2$. The Raikov-Störh bases will be the central objects of this section.

\begin{lema}

Let $A_1, \ldots, A_m$ thin bases, all with order $h$. So $B = \dibigcup_{i = 1}^{m} A_i$ is a thin bases of order $h$.

\end{lema}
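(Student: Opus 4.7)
The plan is to estimate the counting function $B(n)$ from above by a constant times $n^{1/h}$, then to argue that $o(B)$ cannot drop below $h$, after which the thin-basis inequality falls out for free.

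First I would fix constants $c_i > 0$ with $A_i(n) \leq c_i n^{1/h}$ for every $n \in \N^*$, as provided by the thinness of each $A_i$. The obvious inclusion $B \cap \{1, \ldots, n\} \subseteq \bigcup_{i = 1}^{m} (A_i \cap \{1, \ldots, n\})$ gives the union bound $B(n) \leq \sum_{i=1}^{m} A_i(n) \leq c\, n^{1/h}$, where $c = c_1 + \cdots + c_m$. On the other hand, since $B \supseteq A_1$ and $A_1$ is a basis of order $h$, the union $B$ is certainly a basis with $o(B) \leq h$.

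The substantive step is showing $o(B) \geq h$. I would argue by contradiction: suppose $o(B) = k < h$, so that $k \cdot B = \N$. The standard convolution estimate (exactly the one used in the previous lemma) gives $n = (k \cdot B)(n) \leq [1 + B(n)]^k$ for every $n \in \N^*$, whence $B(n) \geq n^{1/k} - 1$. Combined with the upper bound, this forces $n^{1/k} - 1 \leq c\, n^{1/h}$ for every $n$, which is absurd as $n \to \infty$ since $1/k > 1/h$.

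The main obstacle, such as it is, lies in matching the combinatorial upper bound for $B(n)$ with the lower bound that a small order would impose; the rest is bookkeeping. Once the contradiction yields $o(B) = h$, the inequality $B(n) \leq c\, n^{1/h} = c\, n^{1/o(B)}$ is exactly the thin-basis condition with the same constant $c$.
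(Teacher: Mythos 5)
Your argument is correct and follows the same route as the paper: the union bound $B(n) \leq \sum_i A_i(n) \leq (\sum_i c_i)\, n^{1/h}$, the lower bound $o(B) \geq h$ via the convolution estimate of the previous lemma, and $o(B) \leq h$ from $B \supseteq A_1$. The only cosmetic difference is that you re-derive the previous lemma's contradiction argument inline instead of citing it.
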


\begin{proof} By hypothesis, exists constants $c_1, \ldots, c_m > 0$ such that $$A_i(n) \leq c_in^{\frac{1}{h}}, \ \forall \ n \in \N^*, \ \forall \ i \in \{1, \ldots, m\}.$$ Let $n \in \N^*$. Then $$B(n) = \left(\dibigcup_{i = 1}^{m} A_i\right) (n) \leq \disum_{i = 1}^{m} A_i(n) \leq \disum_{i = 1}^{m} c_in^{\frac{1}{h}} = \left(\disum_{i = 1}^{m} c_i\right) n^{\frac{1}{h}}.$$ By the previous lemma, $o(B) \geq h$. Now, as $B \supseteq A_1$, then $o(B) \leq o(A_1) = h$. So $o(B) = h$ and this lemma is proved. \end{proof}

\begin{defi}

Let $\{0, 1\} \subseteq A \subseteq \N$ and $m \in \N$. Denote $A + m = \{a + m; \ a \in A\}$.

\end{defi}

\begin{prop}

If $A \in \A$ is a thin bases of order $h$ and $m \in \N$, then $B = \dibigcup_{i = 0}^{m} (A + i)$ is a thin bases of order $h$.

\end{prop}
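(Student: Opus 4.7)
The plan is to verify three claims in sequence: first that $B \in \A$ with $o(B) \leq h$, then that $o(B) \geq h$ via a counting bound, and finally that the same counting bound certifies thinness.

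For the upper bound on $o(B)$, I would just observe that $A = A + 0 \subseteq B$, which forces $\{0,1\} \subseteq B$ and $o(B) \leq o(A) = h$, so $B$ is already known to be an additive basis of order at most $h$.

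For the lower bound, I would bound the counting function $B(n)$. For each $i \in \{0,1,\ldots,m\}$, subtracting $i$ gives a bijection between $(A+i) \cap \overleftarrow{n}$ and $A \cap \overleftarrow{n-i}$, so $|(A+i) \cap \overleftarrow{n}| = A(n-i) \leq A(n)$, using the convention $A(k) = 0$ for $k < 0$ and the obvious monotonicity of the counting function. Subadditivity of cardinality under unions then yields
\[ B(n) \leq \sum_{i=0}^{m} A(n-i) \leq (m+1)\, c \, n^{1/h}. \]
Applying the first lemma of this section with constant $(m+1)c$ in place of $c$ gives $o(B) \geq h$, hence $o(B) = h$. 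The inequality $B(n) \leq (m+1)c\, n^{1/o(B)}$ is then exactly the thinness condition for $B$.

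No serious obstacle is expected: the whole argument reduces to the translation identity $(A+i)(n) = A(n-i)$ combined with the hypothesis $A(n) \leq c\, n^{1/h}$. In effect this is a mild strengthening of the previous lemma on finite unions of thin bases, with the extra observation that each translate $A+i$, although not itself an element of $\A$, has a counting function dominated by that of $A$.
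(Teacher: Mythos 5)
Your proposal is correct and follows essentially the same route as the paper: bound $B(n) \leq \sum_{i=0}^{m}(A+i)(n) \leq (m+1)c\,n^{1/h}$, invoke the section's first lemma to get $o(B) \geq h$, use $B \supseteq A$ for $o(B) \leq h$, and read off thinness from the same counting bound. The only difference is that you spell out the translation identity $(A+i)(n) = A(n-i)$, which the paper leaves implicit.
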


\begin{proof} Let $n \in \N^*$. Then $$B(n) \left(\dibigcup_{i = 0}^{m} (A + i)\right) (n) \leq \disum_{i = 0}^{m} (A + i)(n) \leq \disum_{i = 0}^{m} A(n) = (m + 1) \cdot A(n) \leq (m + 1)c \cdot n^{\frac{1}{h}}.$$ By lemma 1, $o(B) \geq h$. Now, as $B \supseteq A$, then $o(B) \leq o(A) = h$. So $o(B) = h$. \end{proof}

\begin{prop}

If $A \in \A$ is a thin basis, then $o(A) = o(\mathcal{G}(A))$.

\end{prop}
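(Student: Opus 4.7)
The plan is to mimic the two-sided argument used in the two results immediately preceding this proposition. Letting $h = o(A)$, the goal is to show (i) $o(\mathcal{G}(A)) \leq h$ and (ii) $o(\mathcal{G}(A)) \geq h$, and then combine.

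For direction (i), I would argue $\mathcal{G}(A) \supseteq A$, consistent with the pattern in this section where the operations considered (finite unions of thin bases, unions $\bigcup_{i=0}^{m}(A+i)$, etc.) always produce a superset of $A$. Since any superset of a basis is a basis of order at most the original, we immediately get $o(\mathcal{G}(A)) \leq o(A) = h$.

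For direction (ii), the natural tool is the first lemma of this section: if the counting function $\mathcal{G}(A)(n)$ satisfies a lower bound of the form $c\sqrt[h]{n}$, then $o(\mathcal{G}(A)) \geq h$. Since $\mathcal{G}(A) \supseteq A$ gives $\mathcal{G}(A)(n) \geq A(n)$, and $A$, being a basis of order $h$, must satisfy $A(n) \geq c' n^{1/h}$ on a cofinal sequence (otherwise $hA$ could not cover $\N$, essentially the contrapositive of the first lemma applied to $A$ itself), the lower bound for $\mathcal{G}(A)(n)$ follows. Alternatively, the cleanest route, paralleling the two preceding proofs, is to establish that $\mathcal{G}(A)$ is itself thin of order $h$ by exhibiting $C > 0$ with $\mathcal{G}(A)(n) \leq C n^{1/h}$, and then invoke Lemma~1 of this section directly.

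The main obstacle I foresee lies precisely in the upper counting bound $\mathcal{G}(A)(n) \leq C n^{1/h}$. If $\mathcal{G}(A)$ is a finite or otherwise controlled union of translates, shifts, or dilates of $A$, then sub-additivity of the counting function---used in the exact form $\bigl(\bigcup_i A_i\bigr)(n) \leq \sum_i A_i(n)$ as in the preceding lemma and proposition---reduces the bound to the thinness of $A$. If however $\mathcal{G}$ involves a genuinely infinite construction, naive sub-additivity fails and one must exploit packing or disjointness estimates among the summands; that technical packing argument would be the real heart of the proof. Once this bound is in hand, the two inequalities close up and yield $o(\mathcal{G}(A)) = h = o(A)$.
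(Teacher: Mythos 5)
There is a genuine gap, and it sits exactly where you predicted: the counting estimate for $\mathcal{G}(A)$, which you defer as ``the real heart of the proof,'' is precisely the content of the paper's argument, so the proposal never actually proves the proposition. Moreover, your route (a) is unsound: it takes the section's first lemma at face value (``a lower bound $A(n) \geq c\sqrt[h]{n}$ forces $o(A) \geq h$''), but that statement cannot be right --- $\N$ has the largest possible counting function and order $1$ --- and indeed the lemma's own proof uses the hypothesis $A(n) \leq c\sqrt[h]{n}$ (the statement has a typo). A lower bound on the counting function of $\mathcal{G}(A)$, inherited from $\mathcal{G}(A) \supseteq A$, gives no lower bound on $o(\mathcal{G}(A))$, so direction (ii) cannot be closed this way.

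The paper instead identifies $\mathcal{G}(A)$ concretely (it is $\bigcup_{i \geq 1} A^i$, the perfect powers with base in $A$) and uses the elementary bound $\mathcal{G}(A)(n) \leq \frac{\ln n}{\ln 2}\,A(n)$, since each $a \geq 2$ in $A$ has at most $\log_2 n$ powers below $n$. Then, assuming $o(\mathcal{G}(A)) < o(A)$ and setting $h = o(A) - 1$, it combines $n = [h \cdot \mathcal{G}(A)](n) \leq [1 + \mathcal{G}(A)(n)]^h$ with thinness $A(n) \leq c\,n^{1/(h+1)}$ to get $n \leq C (\ln n)^h n^{h/(h+1)}$, which fails for large $n$. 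Note two points of contrast with your route (b): the paper never shows $\mathcal{G}(A)$ is thin --- the logarithmic loss is simply tolerated because $(\ln n)^h n^{h/(h+1)} = o(n)$ --- and your worry that ``naive sub-additivity fails'' for the infinite union is avoidable anyway, since only exponents $i \leq \log_2 n$ contribute, so $\mathcal{G}(A)(n) \leq \sum_{i \leq \log_2 n} A(n^{1/i}) + O(1)$. Either of these observations would let you finish, but as written the proposal stops short of the decisive estimate and leans, in one branch, on a false reading of the lemma.
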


\begin{proof} By hypothesis, exists $c > 0$ such that $A(n) \leq cn^{\frac{1}{h}}, \ \forall \ n \in \N^*$. Suppose by absurd that $o(A) \neq o(\mathcal{G}(A))$. Then $o(\mathcal{G}(A)) < o(A)$ and $o(A) \geq 2$. Let $h = o(A) - 1 \in \N^*$. Since we are assuming $o(\mathcal{G}(A)) < o(A)$, then $h \cdot \mathcal{G}(A) = \N$. So $$[h \cdot \mathcal{G}(A)](n) = n, \ \forall \ n \in \N^*.$$ Therefore, if $n \in \N^*$, then \begin{eqnarray*} n & = & [1 + \mathcal{G}(A)(n)]^h \leq 2^h\mathcal{G}(A)(n)^h \leq 2^h \left[\difrac{\ln(n)A(n)}{\ln(2)}\right]^h \leq 2^h \left[\difrac{\ln(n) \cdot c n^{\frac{1}{h + 1}}}{\ln(2)}\right]^h = \\ & = & \left(\difrac{2c}{\ln(2)}\right)^h \ln(n)^h n^{\frac{h}{h + 1}} \end{eqnarray*} Dividing by $n$, $$1 \leq \left(\difrac{2c}{\ln(2)}\right) \ln(n)^h n^{\frac{-1}{h + 1}} \rightarrow 0,$$ absurd. So $o(\mathcal{G}(A)) = o(A)$. \end{proof}

\begin{teo}

Let $F$ be the set of all thin bases. Then $\overline{F} = \A$. In particular, the following affirmations are equivalents: \begin{itemize}
                                                                                                   \item $\mathcal{G}$ is continuous in $A \in \A$
                                                                                                   \item $o(A) = o(\mathcal{G}(A))$
                                                                                                 \end{itemize} \end{teo}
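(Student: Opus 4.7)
My plan is to separate the density assertion $\overline{F} = \A$ from the equivalence, deducing the latter as a consequence of the former.

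For density, I fix $A \in \A$ with $o(A) = h$ and $\varepsilon > 0$, and aim to produce a thin basis $B$ of order $h$ with $d(A, B) < \varepsilon$. If $h = 1$ then $A = \N$ is itself thin, so I assume $h \geq 2$. I choose $N$ large enough that $\disum_{\alpha > N} \difrac{1}{\alpha^2} < \varepsilon$, take a Raikov--St\"ohr thin basis $R_h$ of order $h$, fix any integer $M > N$, and set
\[
B = (A \cap \overleftarrow{N}) \cup (R_h + M).
\]
Because $R_h + M \subseteq \{n : n \geq M\}$, the construction yields $B \cap \overleftarrow{N} = A \cap \overleftarrow{N}$, hence $A \triangle B \subseteq \{n : n > N\}$. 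Every integer $n \geq hM + n_0$ (with $n_0$ a threshold beyond which each integer is a sum of $h$ elements of $R_h$) equals $(r_1 + M) + \cdots + (r_h + M)$ with $r_i \in R_h$, so $B$ is an additive basis of order at most $h$. A direct estimate gives $B(n) \leq (N + 1 + c)\, n^{1/h}$, so $B$ is thin; meanwhile $B(n) \geq R_h(n - M) \geq c''(n-M)^{1/h}$ for $n$ large, and by the opening lemma of this section this forces $o(B) \geq h$, whence $o(B) = h$. Therefore
\[
d(A, B) = \soma{A}{B} \leq \disum_{\alpha > N} \difrac{1}{\alpha^2} < \varepsilon,
\]
so $A \in \overline{F}$.

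For the equivalence ($\Rightarrow$), if $\mathcal{G}$ is continuous at $A$ I use the density just proved to pick $(A_n) \subseteq F$ with $A_n \to A$. The previous proposition gives $o(A_n) = o(\mathcal{G}(A_n))$ for every $n$; continuity of $o$ makes $o(A_n)$ eventually equal to $o(A)$, and continuity of $\mathcal{G}$ followed by continuity of $o$ makes $o(\mathcal{G}(A_n))$ eventually equal to $o(\mathcal{G}(A))$, so in the limit $o(A) = o(\mathcal{G}(A))$.

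For the reverse direction ($\Leftarrow$), I would apply Theorem~\ref{continuasseordenativapseudo}. Taking pseudocontinuity of $\mathcal{G}$ for granted (consistent with the earlier pseudocontinuous examples $f_\Gamma$ and $g_\Gamma$), it suffices to verify ordenativeness of $\mathcal{G}$ at $A$. For $B$ in a small enough neighborhood of $A$, continuity of $o$ gives $o(B) = o(A)$, and the density just proved places thin bases $B$ arbitrarily close to $A$, for which $o(\mathcal{G}(B)) = o(B) = o(A) = o(\mathcal{G}(A))$. Propagating this identity from the dense subset of thin bases to all $B$ in a neighborhood by way of pseudocontinuity of $\mathcal{G}$ and Lemma~\ref{pseudoconvergeordemmaiorigual} is the step I expect to be the main obstacle of the argument.
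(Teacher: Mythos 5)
Your density argument has a genuine gap at the claim that $B = (A \cap \overleftarrow{N}) \cup (R_h + M)$ is a basis of order at most $h$. In this paper the order is the exact order (every $n \in \N_0$ must be a sum of at most $h$ elements; this is why $o(\N^3) = 9$ and $\fwc(\N^3) = 23$), so representing all $n \geq hM$ by $h$ elements of $R_h + M$ settles nothing about the integers between the two blocks. Every element of your $B$ is either $\leq N$ or $\geq M$, so an integer $n$ with $hN < n < M$ can only be written using elements $\leq N$, which forces more than $h$ summands; since you allow an arbitrary $M > N$, such $n$ exist, so $o(B) > h$, hence $d(A,B) \geq 1$ and the approximation fails. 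Even for $M$ close to $N$ the midrange $hN < n < hM$ needs an argument, and for a general (possibly sparse) $A \cap \overleftarrow{N}$ this is exactly the hard point. The paper's construction is built around this issue: it first reduces to the dense set $E$ of cofinite bases, keeps $A$ intact up to $n_1$ (so the retained part contains the whole interval $\{n_0, \ldots, n_1\}$, and every $n \leq n_1$ is already a sum of at most $h$ retained elements), and then glues on not a single translate but the union $T = \bigcup_{i=0}^{(h-1)n_0}(T_h + i)$ of shifted Raikov--St\"ohr bases, which is what lets sums of at most $h$ elements reach every integer while keeping thinness and $o(B) = h$.

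On the equivalence: your ($\Rightarrow$) direction is correct and is essentially the paper's ``More general'' theorem applied with $D$ the thin bases near $A$. For ($\Leftarrow$) you explicitly leave the decisive step open, but no propagation of $o(B) = o(\mathcal{G}(B))$ from a dense set to a neighborhood is needed, and indeed that route is doomed (it would be trying to prove ordenativeness, which is the nontrivial content of continuity here). The paper already has the right tool: $\mathcal{G}$ is expansive ($\mathcal{G}(A) \supseteq A$) and pseudocontinuous, so the theorem ``if $f$ is expansive, pseudocontinuous at $A$ and $o(A) = o(f(A))$, then $f$ is continuous at $A$'' gives ($\Leftarrow$) immediately; this is the step you flagged as the main obstacle.
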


\begin{proof} As $E$ is dense in $\A$, we will show that $F$ is dense in $E$. For this, let $A \in E$ e $\varepsilon > 0$, and suppose that $A \neq \N$ to avoid the trivial case. Then $h = o(A) \geq 2$. Write $A = \{0, 1\} \cup \widetilde{A} \cup \overrightarrow{n_0}$, where $n_0$ is the smallest natural number such that $n \geq n_0$, then $n \in  A$. Then $n_0 \geq 3$ and $\widetilde{A} \subseteq \{2, \ldots n_0 - 1\}.$ Let $T_h$ the Raikov-Stöhr basis of order $h$ (so $T_h$ is thin). Let $n_1 \in \N^*$ such that $\disum_{\alpha > n_1} \difrac{1}{\alpha^2} < \varepsilon$. Suppse without loss of generality that $n_1 \geq h_{n_0}$. Let $T = \dibigcup_{i = 0}^{(h - 1)n_0} (T_h + i)$, which, by the previous proposition, is a thin basis of order $h$. Let $$B = \{0, 1\} \cup \widetilde{A} \cup \{n_0, \ldots, n_1\} \cup (T \cap \overrightarrow{n_1 + 1}).$$ I claim that \begin{itemize}
                                                                                                                                   \item $o(B) = o(A)$
                                                                                                                                   \item $d(A, B) < \varepsilon$
                                                                                                                                   \item $B$ is thin
                                                                                                                                 \end{itemize} This ends the theorem.\end{proof}



\end{document}